\documentclass{imsart}
\RequirePackage{natbib}
\bibliographystyle{imsart-nameyear}

\usepackage{amsmath,amssymb,amsthm}
\usepackage{graphics,epsfig}
\usepackage{hyperref}
\usepackage{natbib}
\usepackage{color}
\usepackage{graphicx}
\usepackage{caption}
\usepackage{subcaption}
\usepackage{float}
\usepackage{dsfont}
\usepackage{mathrsfs}
\usepackage{multirow}
\usepackage{comment}
\usepackage{bm}


\def \ra {\rightarrow}

\def \E {\mathbb{E}}

\newtheorem{definition}{\bf Definition}

	\newtheorem{theorem}{\bf Theorem}
	
	\newtheorem{prop}{\bf Proposition}
	
	\newtheorem{as}{\bf Assumption}
	
	\newtheorem{cor}[theorem]{\bf Corollary}

	\setcounter{theorem}{0}

\renewcommand{\epsilon}{\varepsilon}

\begin{document}

\begin{frontmatter}

\title{Consensus as a Nash Equilibrium of a stochastic differential game}
\runtitle{Dynamic  Network}

\begin{aug}

\author{\fnms{Paramahansa} 
	\snm{Pramanik}
	\ead[label=e1]{ppramanik@southalabama.edu}}

\runauthor{P. Pramanik}

\affiliation{University of South Alabama}

\address{Department of Mathematics and Statistics\\ University of South Alabama\\ Mobile, AL 36688 USA.}

\end{aug}
  
\begin{abstract}
In this paper a consensus has been constructed in a social network which is modeled by a stochastic differential game played by agents of that network. Each agent independently minimizes a cost function which represents their motives. A conditionally expected integral cost function has been considered under an agent's opinion filtration. The dynamic cost functional is minimized subject to a stochastic differential opinion dynamics. As opinion dynamics represents an agent's differences of opinion from the others as well as from their previous opinions, random influences and stubbornness make it more volatile. An agent uses their rate of change of opinion at certain time point as a control input. This turns out to be a non-cooperative stochastic differential game which have a feedback Nash equilibrium. A Feynman-type path integral approach has been used to determine an optimal feedback opinion and control. This is a new approach in this literature. Later in this paper an explicit solution of a feedback Nash equilibrium opinion is determined.
\end{abstract}

\begin{keyword}[class=MSC]
\kwd[Primary ]{C73}
\kwd[; Secondary ]{C61}
\end{keyword}

\begin{keyword}
\kwd{Opinion dynamics}
\kwd{Social network}
\kwd{Feynman-type path integral}
\kwd{quantum game}
\kwd{Feedback Nash equilibrium}
\kwd{Stochastic control}
\end{keyword}

\end{frontmatter}

\section{Introduction}
Social networks influence a lot of behavioral activities including educational achievements \citep{calvo2009}, employment \citep{calvo2004}, technology adoption \citep{conley2010}, consumption \citep{moretti2011} and smoking \citep{nakajima2007,sheng2020}. As social networks are the result of individual decisions, consensus takes an important role to understand the formation of networks. Although a lot of theoretical works on social networks have been done \citep{jackson2010,goyal2012,sheng2020}, work on consensus as a Nash equilibrium under a stochastic network is very insignificant \citep{niazi2016}. \cite{sheng2020} formalizes network as simultaneous-move game, where social links based on decisions are based on utility externalities from indirect friends. \cite{sheng2020} proposes a computationally feasible partial identification approach for large social networks. The statistical analysis of network formation dates back to the seminal work by \cite{erdos1959} where a random graph is based on independent links with a fixed probability \citep{sheng2020}. Beyond Erd\"os-R\'enyi model, many methods have been designed to simulate graphs with  characteristics like degree distributions, small world, and Markov type properties \citep{polansky2021}. A model based method is useful if this model can be fit successfully and if it is a relatively simple to simulate realizations \citep{pramanik2016tail,polansky2021}. The most frequently used general model for random graphs is the exponential random graph model (ERGM) \citep{snijders2002,hua2019,polansky2021} because, this model fits well with the observed network statistics \citep{sheng2020}. This ERGM model lacks microfoundations which are important for counterfactual analyses and furthermore, economists view network analysis as the optimal choices of agents who maximizes their utilities \citep{sheng2020}. Network evolves as the result of a stochastic process is another popular framework where network may be observed, but it is the parameters of the stochastic process that are of interest, and the observed network is a single realization of the stochastic process \citep{polansky2021}.

At birth, humans already posses different types of skills like breathing, digest foods and motor actions which make a human body to behave like an automaton \citep{kappen2007}. Furthermore, like other animals humans acquire skills through learning. Different person has different abilities to acquire a new information in order to get an idea about pleasure, danger or food \citep{kappen2007}. Humans are the most complicated species on earth because, their decisions are not linear and they can learn difficult skills through transitional signals of their complex constellations of sensory patterns. For example, if food is kept in front of a hamster, it would eat immediately. On the other hand, if a plate of food is kept in front of a human, they might not eat because, variety of factors such as the texture, smell, amount of it, their sociocultural background, religion and ethnicity take place before even they think about to tastes it. In order to make this decision, a lot of complex neural activities take place inside a person's brain. Action of two main parts of a human brain, frontal and occipital lobes, makes them decide what they should do after seeing an object. In this case the occipital lobe sends information of an object through the synaptic systems to frontal lobe, which is known by their previous experiences and knowledge. As for humans one has to consider so many other possibilities compared to a hamster, such that they can choose any of the all available information with some probabilities and make decisions based on it.

This type of human behavior is a feedback circuit where the learning algorithm is determined by a synaptic motor command, more time with an object not only leads to get more information but also the knowledge to adapt with it in the long run and get more intelligence. For example, as humans grow older, more intelligent they become and reflects their genotype closely. On the other hand, environment influences a certain type of decision more with older ages which comes through a process called Hebbian learning \citep{hebb2005,kappen2007}. Ancestors gather more information about an object or circumstance and transfer it to their off-springs in order to help them survive easily and make decisions rationally \cite{kappen2007}. For example, without having a prior knowledge one does not know how to get a certain type restaurant and which lead them explore their surroundings. If that person finds out a restaurant, they survive for that day. On the next day, they might not be completely sure about full availability of food in that restaurant because of sudden environmental degradation after his previous visit such as flash flood, tornado, an avalanche, landslide, earthquake, other activities like closure due to burglary, fire or some gun related activities so on. Even if that person is sure about the availability of food, they might not go because of other socioeconomic behaviors at the back of their mind. Hence, more information might not lead them react rationally. These types of activities occurs when an event is more uncertain. Consider person $A$ is selling their $1$ million-dollar car to another person $B$ by just $\$ 50,000$. The rationality assumption suggests person $B$ to go for this offer but, $B$ might think why $A$ is giving this offer and might be suspicious about the quality of that car and rejects it.

Therefore, subjective probabilities take an important role to make these types of decisions based on individual judgments such as success in a new job, outcome of an election, state of an economy, difference in learning a new complex topic among students, spreading gossips in small communities  \citep{kahneman1972,niazi2016,tversky1971}. People follow representativeness in judging the likelihood of uncertain events where the probability is defined by the similarities in essential properties to its parent population and reflect the salient features of the process by which it is generated \citep{kahneman1972}, which makes opinion dynamics of a person to follow a stochastic differential equation. Furthermore, an individual minimizes its cost of foraging for food where finding food can be termed as a reward to them and they want to find their reward with minimal cost. Assume an agent discounts more to the recent future than farther future represented as feedback motor control reinforcement learning problem \cite{kappen2007}. In an environment of very complex opinion dynamics each agent minimizes their integral cost function subject to a stochastic differential opinion dynamics based on all above cases. This paper considers two environments first, all the agents have same opinion power and second, agents with a leader, where the leader has more power in opinion than others and determines their opinion first based on their own cost minimization mechanism. A feedback Nash equilibrium of opinion is determined by a Feynman-type path integral approach which so far from my knowledge is new \citep{feynman1949,pramanik2021opt}. Furthermore, this approach can be used to obtain a solution for stability of an economy after pandemic crisis \citep{ahamed2021}, determine an optimal bank profitability \citep{hossain2015,ahamed2021d}.

As each agent's opinion in a society is assumed to be a quantum particle, I introduce an alternative method based on Feynman-type path integral to solve this stochastic opinion dynamics problem based on Feynman-type path integrals instead of traditional Pontryagin Maximum Principle. If the objective function is quadratic and the differential equations are linear, then solution is given in terms of a number of Ricatti equations which can be solved efficiently \citep{kappen2007a}. But the opinion dynamics is more complicated than just an ordinary linear differential equation and non-linear stochastic feature gives the optimal solution a weighted mixture of suboptimal solutions, unlikely in the cases of deterministic or linear optimal control where a unique global optimal solution exists \citep{kappen2007a}. In the presence of Wiener noise, Pontryagin Maximum Principle, a variational principle, that leads to a coupled system of stochastic differential equations with initial and terminal conditions, gives a generalized solution \citep{kappen2007a,oksendal2019}. Although incorporate randomness with its Hamiltonian-Jacobi-Bellman (HJB) equation is straight forward but difficulties come due to dimensionality when a numerical solution is calculated for both of deterministic or stochastic HJB \citep{kappen2007a}. General stochastic control problem is intractable to solve computationally as it requires an exponential amount of memory and computational time because, the state space needs to be discretized and hence, becomes exponentially large in the number of dimensions \citep{theodorou2010,theodorou2011,yang2014path}. Therefore, in order to calculate the expected values it is necessary to visit all states which leads to the summations of exponentially large sums \citep{kappen2007a,yang2014path}. \cite{kappen2005linear} and \cite{kappen2005} say that a class of continuous non-linear stochastic finite time horizon control problems can be solved more efficiently than Pontryagin's Maximum Principle. These control problems reduce to computation of path integrals interpreted as free energy because, of their various statistical mechanics forms such as Laplace approximations, Monte Carlo sampling, mean field approximations or belief propagation \citep{kappen2005linear,kappen2005,kappen2007a,van2008}. According to \cite{kappen2007a} these approximate computations are really fast.

Furthermore, one can transform a class of non-linear HJB equations into linear equations by doing a logarithmic transformation. This transformation stems back to the early days of quantum mechanics which was first used by Schr\"odinger to relate HJB equation to the Schrödinger equation \citep{kappen2007a}. Because of this linear feature, backward integration of HJB equation over time can be replaced by computing expectation values under a forward diffusion process which requires a stochastic integration over trajectories that can be described by a path integral \citep{kappen2007a}. Furthermore, in more generalized case like Merton-Garman-Hamiltonian system, getting a solution through Pontryagin Maximum principle is impossible and Feynman path integral method gives a solution \citep{baaquie1997,pramanik2020,param2021}. Previous works using Feynman path integral method has been done in motor control theory by \cite{kappen2005}, \cite{theodorou2010} and \cite{theodorou2011}. Applications of Feynman path integral in finance has been discussed rigorously in \cite{belal2007}. In \cite{pramanik2020} a Feynman-type path integral has been introduced to determine a feedback stochastic control. This methods works in both linear and non-linear stochastic differential equations and a Fourier transformation has been used to find out solution of Wick-rotated Schr\"odinger type equation \citep{pramanik2020,pramanik2020motivation,pramanik2020optimization,param2021,pramanik2021opt,param2021s}. A more generalized Nash equilibrium on tensor field has been discussed in \cite{pramanik2019}.

\section{A stochastic differential game of opinion dynamics}

Following \cite{niazi2016} consider a social network of $n$ agents by a weighted directed graph $G=(N,E,w_{ij})$, where $N=\{1,...,n\}$ is the set of all agents. Suppose, $E\subseteq N\times N$ is the set of all ordered pairs of all connected agents and, $w_{ij}$ is the influence of agent $j$ on agent $i$ for all $(i,j)\in E$. There are usually two types of connections, one sided or two sided. For the principle-agent problem the connection is one sided (i.e. Stackelberg model) and agent-agent problem it is two sided (i.e. Cournot model). Suppose $x^i(s)\in[0,1]$ be the opinion of agent $i^{th}$ at time $s\in[0,t]$ with their initial opinion $x^i(0)=x_0^i\in[0,1]$. Then $x^i(s)$ has been normalized into $[0,1]$ where $x^i(s)=0$ stands for a strong disagreement and $x^i(s)=1$ represents strong agreement and all other agreements stays in between. Consider $\mathbf x(s)=\left[x^1(s),x^2(s),...,x^n(s)\right]'\in[0,1]^n$ be the opinion profile vector of $n$-agents at time $s$ where `prime' represents the transpose. Following \cite{niazi2016} consider a cost function of agent $i$ as
\begin{align}\label{0}
L^i(s,\mathbf x,x_0^i,u^i)&=\int_0^t \mbox{$\frac{1}{2}$} \bigg\{\sum_{j\in\eta_i}w_{ij}\left[x^i(s)-x^j(s)\right]^2+k_i\left[x^i(s)-x_0^i\right]^2+\left[u^i(s)\right]^2\bigg\} ds,
\end{align}
where $w_{ij}\in[0,\infty)$ is a parameter which weighs the susceptibility of agent $j$ to influence agent $i$, $k_i\in[0,\infty)$ is agent $i$'s stubbornness, $u^i(s)$ is the control variable of agent $i$ and set of all agents with whom $i$ interacts is $\eta_i$ and defined as $\eta_i:=\{j\in N:(i,j)\in E\}$. The cost function $L^i(s,\mathbf x,x_0^i,u^i)$ is twice differentiable with respect to time in order to satisfy Wick rotation, is continuously differentiable with respect to $i^{th}$ agent's control $u^i(s)$, non-decreasing in opinion $x^i(s)$, non-increasing in $u^i(s)$, and convex and continuous in all opinions and controls \citep{mas1995,pramanik2020optimization}. The opinion dynamics of agent $i$ follows a stochastic differential equation 
\begin{align}\label{1}
dx^i(s)&=\mu^i[s,x^i(s),u^i(s)]ds+\sigma^i[s,x^i(s),u^i(s)]dB^i(s),
\end{align}
with the initial condition $x_0^i$, where $\mu^i$ and $\sigma^i$ are the drift and diffusion component of agent $i$ with $B^i(s)$ is the Brownian motion. The reason behind incorporating Brownian motion in agent $i$'s opinion dynamics is because of Hebbian Learning which states that, neurons increase the synaptic connection strength between them when they are active together simultaneously and this behavior in probabilistic in the sense that, resource availability from a particular place is random  \citep{hebb2005, kappen2007}. For example, for a given stubbornness, and influence from agent $j$, agent $i$'s opinion dynamics has some randomness in opinion. Suppose, from other resources agent $i$ knows that, the information provided by agent $j$'s influence is misleading. Apart from that after considering humans as automatons, motor control and foraging for food becomes a big examples of minimization of costs (or the expected return) \cite{kappen2007}. As control problems like motor controls are stochastic in nature because there is a noise in the relation between the muscle contraction and the actual displacement with joints with the change of the information environment over time, we consider Feynman path integral approach to calculate the stochastic control after assuming the opinion dynamics Equation (\ref{1}) (\cite{feynman1949},\cite{fujiwara2017}). The coefficient of the control term in Equation (\ref{0}) is normalized to $1$, without loss of generality. The cost functional represented in the Equation (\ref{0}) is viewed as a model of the motive of agent $i$ towards a prevailing social issue \cite{niazi2016}. In this dynamic social network problem agent $i$'s objective is to $\min_{u^i}\{\E_s(L^i)|\mathcal F_0^x\}$ subject to the Equation (\ref{1}), where $\E_0(L^i)|\mathcal F_0^x$ represents the expectation on $L^i$ at time $0$ subject to agent $i$'s opinion filtration $\mathcal F_0^x$ starting at the initial time $0$. A solution of this problem is a feedback Nash equilibrium as the control of agent $i$ is updated based on the opinion at the same time $s$.

\section{Definitions and Assumptions}

\begin{as}\label{as0}
	For $t>0$ and $i=1,...,n$, let ${{\mu^i}}(s,x^i,u^i):[0,t]\times\mathbb{R}^2 \ra\mathbb{R}$ and ${\sigma}^i(s,x^i,u^i):[0,t]\times\mathbb{R}^2 \ra\mathbb{R}$ be some measurable function and, for some constant $M_1^i>0$ and, for opinion $x^i\in\mathbb{R}$ the linear growth of agent $i$'s control $u^i$ as
	\[
	|{{\mu^i}}(s,x^i,u^i)|+
	|{\sigma}^i(s,x^i,u^i)|\leq 
	M_1^i(1+|x^i|),
	\]
	such that, there exists another constant $M_2^i>0$ and for a different
	$\widetilde{x}^i\in\mathbb{R}$ such that the Lipschitz conditions,
	\[
	|{{\mu}^i}(s,x^i,u^i)-
	{{\mu}^i}(s,\widetilde{x}^i,u^i)|+|{\sigma}^i(s,x^i,u^i)-{\sigma}^i(s,u,\widetilde{x})|
	\leq M_2^i |x^i-\widetilde{x}^i|,
	\]
	and
	\[
	|{{\mu}^i}(s,x^i,u^i)|^2+
	|{\sigma}^i(s,x^i,u^i)|^2\leq (M_2^i)^2
	(1+|\widetilde{x}^i|^2),
	\]
	hold.
\end{as}

\begin{as}\label{asf1}
	Agent $i$ faces a probability space $(\Omega,\mathcal{F}_s^{x},\mathcal{P})$ with sample space $\Omega$, $u^i$-adaptive filtration at time $s$ of opinion $x^i$ as $\{\mathcal{F}_s^{x}\}\subset\mathcal{F}_s$, a probability measure $\mathcal{P}$ and $n$-dimensional $\{\mathcal{F}_s\}$ Brownian motion $B^i$ where the control of $i^{th}$ agent $u^i$ is an $\{\mathcal{F}_s^{x}\}$ adapted process such that Assumption \ref{as0} holds, for the feedback control measure of agents in a society there exists a measurable function $h^i$ such that $h^i:[0,t]\times C([0,t]):\mathbb{R}^{n}\ra u^i$ for which $u^i(s)=h^i[x^i(s,u^i)]$ such that Equation (\ref{1}) has a strong unique solution.
\end{as}

\begin{as}\label{asf3}
	(i). $\mathcal Z\subset\mathbb R^{n}$ such that agent $i$ cannot go beyond set $\mathcal Z_i\subset \mathcal Z$ because of their limitations of acquiring knowledge from their society at a given time. This immediately implies set $\mathcal Z_i$ is different for different agents. If the agent is young , they would have less limitation to acquire new information and make opinions on it.\\
	(ii). The function $h_0^i:[0,t]\times\mathbb R^{n}\ra\mathbb R$. Therefore, all agents in a society at the beginning of $[0,t]$ have the cost function $h_0:[0,t]\times\mathbb R^{n}\ra\mathbb R$ such that $h_0^i\subset h_0$ in functional spaces and both of them are concave which is equivalent to Slater condition \citep{marcet2019}. Possibility of giving a partial opinion has been omitted in this paper.\\
	(iii). There exists an $\epsilon>0$ with $\epsilon\downarrow 0$ for all $(x^i, u^i)$ and $i=1,2,...,n$ such that 
	\[
	\E_0\left\{\int_{0}^t\mbox{$\frac{1}{2}$} \bigg\{\sum_{j\in\eta_i}w_{ij}\left[x^i(s)-x^j(s)\right]^2+k_i\left[x^i(s)-x_0^i\right]^2+\left[u^i(s)\right]^2\bigg\}\bigg|\mathcal F_0^{x}\right\}ds\geq\epsilon.
	\]
\end{as}

The opinion dynamics of $i^{th}$ agent is continuous and it is mapped from an interval to a space of continuous functions with initial and terminal time points. Suppose, at time $s$, $g(s):[p,q]\ra\mathcal{C}$ represents an opinion dynamics of $i^{th}$ agent with initial and terminal points $g(p)$ and $g(q)$ respectively, such that, the line path integral is $\int_{\mathcal{C}} f(\gamma) ds=\int_{p}^q f(g(s))|g'(s)| ds$,  where $g'(s)$ is derivative with respect to $s$. In this paper I consider functional path integrals where the domain of the integral is the space of functions \citep{pramanik2020,param2021,pramanik2021opt}. Functional path integrals are very popular in probability theory and quantum mechanics. In \cite{feynman1948} theoretical physicist Richard Feynman introduced \emph{Feynman path integral} and popularized it in quantum mechanics. Furthermore, mathematicians develop the measurability of this functional integral and in recent years it has become popular in probability theory \citep{fujiwara2017}. In quantum mechanics, when a particle moves from one point to another, between those points it chooses the shortest path out of infinitely many paths such that some of them touch the edge of the universe. After introducing equal length small time interval$[s,s+\epsilon]$ with $\epsilon>0$ such that $\epsilon\downarrow 0$ and using Riemann–Lebesgue lemma if at time $s$ one particle touches the end of the universe, then at a later time point it would come back and go to the opposite side of the previous direction to make the path integral a measurable function \citep{bochner1949}. Similarly, agent $i$ has infinitely opinions, out of them they choose the opnion corresponding to least cost given by the constraint explained in Equation (\ref{1}). Furthermore, the advantage of Feynman approach is that, it can be used in both in linear and non-linear stochastic differential equation systems where constructing an HJB equation is almost impossible \citep{belal2007}.

\begin{definition}\label{d0}
	Suppose, for a particle $\hat{\mathcal{L}}[s,y(s),\dot y(s)]=(1/2) \hat m\dot y(s)^2-\hat{V}(y)$ be the Lagrangian in classical sense in generalized coordinate $y$ with mass $\hat m$ where $(1/2)\hat m\dot y^2$ and $\hat V(y)$ are kinetic and potential energies respectively. The transition function of Feynman path integral corresponding to the classical action function\\ $Z^*=\int_0^t \hat{\mathcal{L}}(s,y(s),\dot y(s)) ds$  is defined as $\Psi(y)=\int_{\mathbb R} \exp\{{Z^*}\} \mathcal{D}_Y $, where $\dot y=\partial y/\partial s$ and $\mathcal{D}_Y$ is an approximated Riemann measure which represents the positions of the particle at different time points $s$ in $[0,t]$ \citep{pramanik2020}.
\end{definition}

Here $i^{th}$ agent's objective is to minimize Equation (\ref{0}) subject to Equations (\ref{1}). Following Definition \ref{d0} the quantum Lagrangian at time $s$ of $[s,s+\epsilon]$ is 
\begin{multline}\label{3}
\mathcal{L}^i=\E_s\biggr\{\mbox{$\frac{1}{2}$} \bigg\{\sum_{j\in\eta_i}w_{ij}\left[x^i(s)-x^j(s)\right]^2+k_i\left[x^i(s)-x_0^i\right]^2+\left[u^i(s)\right]^2\bigg\}ds\\+\lambda^i\left[\Delta x^i(s)-\mu^i[s,x^i(s),u^i(s)]ds-\sigma^i[s,x^i(s),u^i(s)]dB^i(s)\right]\biggr\},
\end{multline}
where $\lambda^i$ is a time independent quantum Lagrangian multiplier (one can think of as a penalization constant of agent $i$). As at the beginning of the small time interval $[s,s+\epsilon]$, agent $i$ does not have any future information, they make expectations based on their opinion $x^i$. For another normalizing constant $L_\epsilon^i>0$ and for time interval $[s,s+\epsilon]$ such that $\epsilon\downarrow 0$ define a transition function from $s$ to $s+\epsilon$ as
\begin{equation}\label{lin8}
\Psi_{s,s+\varepsilon}^i(x^i)=\frac{1}{L_\varepsilon^i} \int_{\mathbb{R}^{n}}\exp[-\varepsilon \mathcal{A}_{s,s+\varepsilon}(x^i)]\Psi_s^i(x^i)dx^i(s),
\end{equation}
where $\Psi_s^i(x^i)$ is the value of the transition function based on opinion $x^i$ at time $s$ with the initial condition $\Psi_0^i(x^i)=\Psi_0^i$. Therefore, the action function of agent $i$ is,
\begin{multline*}
\mathcal{A}_{s,s+\varepsilon}(x^i)=
\int_{s}^{s+\varepsilon}\E_\nu\biggr\{\mbox{$\frac{1}{2}$} \bigg\{\sum_{j\in\eta_i}w_{ij}\left[x^i(\nu)-x^j(\nu)\right]^2+k_i\left[x^i(\nu)-x_0^i\right]^2+\left[u^i(\nu)\right]^2\bigg\}d\nu\\+h^i[\nu+\Delta \nu,x^i(\nu)+\Delta x^i(\nu)]\biggr\},
\end{multline*}
where $h^i[\nu+\Delta \nu,x^i(\nu)+\Delta x^i(\nu)]\in C^2([0,t]\times\mathbb{R}^{n})$ such that,
\begin{multline*}
h^i[\nu+\Delta \nu,x^i(\nu)+\Delta x^i(\nu)]\\ = \lambda^i\left[\Delta x^i(\nu)-\mu^i[\nu,x^i(\nu),u^i(\nu)]d\nu-\sigma^i[\nu,x^i(\nu),u^i(\nu)]dB^i(\nu)\right].
\end{multline*}
Here the action function has the notation $\mathcal{A}_{s,s+\epsilon}(x^i)$ which means within $[s,s+\epsilon]$ the action of agent $i$ depends on their opinion  $x^i$ and furthermore, I assume this system has a feedback structure. Therefore, the opinion of agent $i$ also depends on the strategy $u^i$ as well as the rest of the school. Same argument goes to the transition function $\Psi_{s,s+\epsilon}(x^i)$.

\begin{definition}\label{d1}
	For agent $i$ optimal opinion  $x^{i*}(s)$ and their continuous optimal strategy  $u^{i*}(s)$ constitute a dynamic stochastic Equilibrium  such that for all $s\in[0,t]$ the conditional expectation of the cost function is
	\begin{multline*}
	\E_0 \left[\int_{0}^t\mbox{$\frac{1}{2}$} \bigg\{\sum_{j\in\eta_i}w_{ij}\left[x^{i*}(s)-x^{j*}(s)\right]^2+k_i\left[x^{i*}(s)-x_0^i\right]^2+\left[u^{i*}(s)\right]^2\bigg\}ds\bigg|\mathcal F_0^{x^*} \right]ds\\\geq\E_0 \left[\int_{0}^t\mbox{$\frac{1}{2}$} \bigg\{\sum_{j\in\eta_i}w_{ij}\left[x^i(s)-x^j(s)\right]^2+k_i\left[x^i(s)-x_0^i\right]^2+\left[u^i(s)\right]^2\bigg\}ds\bigg|\mathcal F_0^{x}\right] ds,
	\end{multline*}
	with the opinion dynamics explained in Equation (\ref{1}), where $\mathcal F_0^{x^*}$ is the optimal filtration starting at time $0$ such that, $\mathcal F_0^{x^*}\subset\mathcal F_0^{x}$.
\end{definition}

\section{Main results}
Suppose, for the opinion space $S_0=\{\mathbf x(s):s\in[0,t]\}$ and agent $i$'s strategy space $\Gamma^i$ there exists a permissible  strategy $\gamma^i:[0,t]\times S_0\ra\Gamma^i$ and for all $i\in N$ define the integrand of the cost function as
\[
g^i(s,\mathbf x,x_0^i,u^i)=\mbox{$\frac{1}{2}$} \bigg(\sum_{j\in\eta_i}w_{ij}\left[x^i(s)-x^j(s)\right]^2+k_i\left[x^i(s)-x_0^i\right]^2+\left[u^i(s)\right]^2\bigg).
\]

\begin{prop}\label{p0}
	For stochastic dynamic game of $n$-agents of time interval $[0,t]$, let for agent $i$\\
	(i) the feedback control $u^i(s,x^i):[0,t]\times \mathbb{R}\ra\mathbb{R}$ is a continuously differentiable function,\\
	(ii) The cost integrand $g^i(s,\mathbf x,x_0^i,u^i):[0,t]\times\mathbb{R}^n\times\mathbb{R}\times\mathbb{R}\ra\mathbb{R}$ is a $C^2$ function on $\mathbb{R}$ for all $i\in N$.
	
	If $\big\{\gamma^{i*}(s,x_0^i,x^i(s))=\phi^{i*}(s,x^i); i\in N\big\}$ is a feedback Nash equilibrium and $\{\mathbf x(s), s\in[0,t]\}$ is the opinion trajectory, then there exists $n$ Lagrangian multipliers $\lambda^i:[0,t]\ra\mathbb{R}, i\in N$ with initial condition $\lambda_0^i$ such that, for a Lagrangian 
	\begin{align}
	\mathcal L^i(s,\mathbf x,x_0^i,u^i)&=g^i(s,\mathbf x,x_0^i,u^i)+\lambda^i\big[dx^i(s)-\mu^i(s,x^i,u^i)ds-\sigma^i(s,x^i,u^i)dB^i(s)\big]\notag
	\end{align}
	with its Euclidean action function 
	\begin{align}
	\mathcal A_{0,t}^i(x)&=\int_0^t\E_s\bigg\{g^i(s,\mathbf x,x_0^i,u^i)ds+\lambda^i\big[dx^i(s)-\mu^i(s,x^i,u^i)ds-\sigma^i(s,x^i,u^i)dB^i(s)\big]\bigg\}\notag
	\end{align}
	the following conditions hold: (a) $\lambda^i=\frac{\partial }{\partial x^i}\mathcal L^i$, and (b) $x^{i*}(0)=x_0^i\in[0,1]$ with $ i\in N$. Under this case, the optimal feedback control will be the solution of the following equation
	\begin{align}
	\mbox{$\frac{\partial}{\partial u^i}$}f^i(s,\mathbf x,\lambda^i,u^i) \left[\mbox{$\frac{\partial^2}{\partial (x^i)^2}$}f^i(s,\mathbf x,\lambda^i,u^i)\right]^2=2\mbox{$\frac{\partial}{\partial x^i}$}f^i(s,\mathbf x,\lambda^i,u^i) \mbox{$\frac{\partial^2}{\partial x^i\partial u^i}$}f^i(s,\mathbf x,\lambda^i,u^i),\notag
	\end{align}
	where for a function $h^i(s,x^i)\in C^2([0,\infty)\times\mathbb{R})$
	\begin{align}
	f^i(s,\mathbf x,\lambda^i,u^i)&=g^i(s,\mathbf x,x_0^i,u^i)+\lambda^ih^i(s,x^i)+\left[\lambda^i\mbox{$\frac{\partial h^i(s,x^i)}{\partial s}$}+\mbox{$\frac{\partial\lambda^i(s)}{\partial s}$}h^i(s,x^i)\right]\notag\\&\hspace{.25cm}+\lambda^i\mbox{$\frac{\partial h^i(s,x^i)}{\partial x^i}$}\mu^i(s,x^i,u^i)+\mbox{$\frac{1}{2}$}\lambda^i[\sigma^{i}(s,x^i,u^i)]^2\mbox{$\frac{\partial^2 h^i(s,x^i)}{\partial (x^i)^2}$}.\notag
	\end{align}
\end{prop}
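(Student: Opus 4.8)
\noindent\emph{Sketch of the proof.} The plan is to run, for each agent $i$ separately, the Feynman-type path-integral apparatus of Definition~\ref{d0} together with the equilibrium notion of Definition~\ref{d1}, and to read the optimality conditions off the short-time limit of the transition function~(\ref{lin8}). First I would fix agent $i$ and freeze the remaining agents at their equilibrium strategies, which is exactly what the feedback Nash property of $\{\gamma^{i*}\}$ and Definition~\ref{d1} permit; this reduces agent $i$'s task to ``minimize $\E_0[L^i\mid\mathcal F_0^x]$ subject to~(\ref{1})''. I would then enforce~(\ref{1}) with the multiplier $\lambda^i$, forming $\mathcal L^i=g^i+\lambda^i[dx^i-\mu^i ds-\sigma^i dB^i]$ and its Euclidean action $\mathcal A^i_{0,t}$; existence of an $\{\mathcal F_s^x\}$-adapted $\lambda^i$ with initial value $\lambda_0^i$ is underwritten by Assumption~\ref{asf1} (strong unique solution of~(\ref{1})) and Assumption~\ref{asf3}(iii). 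Writing the first-order (variational) conditions for $\mathcal A^i_{0,t}$ in the opinion variable then produces (a), which identifies $\lambda^i$ as the momentum conjugate to $x^i$ in the path-integral Hamiltonian formalism, while (b) is the initial datum of~(\ref{1}) transported along the equilibrium trajectory.

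The next step localizes on a mesh cell $[s,s+\varepsilon]$, $\varepsilon\downarrow 0$, and rewrites the action $\mathcal A_{s,s+\varepsilon}(x^i)$ of the excerpt in a Gaussian-friendly form. Applying It\^o's formula to $\lambda^i(s)\,h^i(s,x^i(s))$ along~(\ref{1}) gives
\[
d\big(\lambda^i h^i\big)=\left[\frac{\partial\lambda^i}{\partial s}h^i+\lambda^i\frac{\partial h^i}{\partial s}+\lambda^i\mu^i\frac{\partial h^i}{\partial x^i}+\frac{1}{2}\lambda^i(\sigma^i)^2\frac{\partial^2 h^i}{\partial(x^i)^2}\right]ds+\lambda^i\sigma^i\frac{\partial h^i}{\partial x^i}\,dB^i(s);
\]
the $dB^i$-increment is a martingale and vanishes under $\E_\nu$, and the surviving drift, added to the cost integrand $g^i$ and to the term $\lambda^i h^i$, is precisely $f^i(s,\mathbf x,\lambda^i,u^i)$. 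Hence $\mathcal A_{s,s+\varepsilon}(x^i)=\int_s^{s+\varepsilon}\E_\nu[f^i(\nu,\mathbf x,\lambda^i,u^i)]\,d\nu+o(\varepsilon)$.

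The third step is the short-time expansion itself. Substituting the last relation into~(\ref{lin8}), I would Taylor-expand $\Psi^i_s$ about $x^i(s+\varepsilon)$, expand $\exp[-\varepsilon\mathcal A_{s,s+\varepsilon}]$ to first order in $\varepsilon$, and perform the Gaussian integration over the opinion increment against the approximated Riemann measure $\mathcal D_Y$ — this is where the Riemann--Lebesgue/Wick-rotation mechanism described after Definition~\ref{d0} is used to render the limiting object measurable and to kill boundary contributions. Matching the $O(1)$ and $O(\varepsilon)$ terms yields a Wick-rotated Schr\"odinger-type equation for $\Psi^i$ with ``potential'' $f^i$; solving it by a Fourier transform in $x^i$ expresses $\Psi^i$ in closed form as an exponential of an effective action built from $f^i$, $\frac{\partial}{\partial x^i}f^i$ and $\frac{\partial^2}{\partial(x^i)^2}f^i$, carrying a saddle-point prefactor of order $[\frac{\partial^2}{\partial(x^i)^2}f^i]^{-1/2}$ from the Gaussian normalization. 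Since $\{\gamma^{i*}\}$ is a feedback Nash equilibrium, agent $i$'s equilibrium control makes $\E_0[L^i\mid\mathcal F_0^x]$ — equivalently $\Psi^i$ — stationary in $u^i$ at the optimal opinion; differentiating the effective-action exponent in $u^i$ (legitimate by hypothesis (i) and the feedback representation $u^i=h^i[x^i(s,u^i)]$) and clearing the common denominator $[\frac{\partial^2}{\partial(x^i)^2}f^i]^2$ gives exactly
\[
\mbox{$\frac{\partial}{\partial u^i}$}f^i \left[\mbox{$\frac{\partial^2}{\partial (x^i)^2}$}f^i\right]^2=2\,\mbox{$\frac{\partial}{\partial x^i}$}f^i \;\mbox{$\frac{\partial^2}{\partial x^i\partial u^i}$}f^i .
\]

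The hard part is this third step. Two issues demand care: first, the rigorous $\varepsilon\downarrow 0$ passage in~(\ref{lin8}) — showing that the remainders in the It\^o and Taylor expansions are uniform enough (via the linear-growth and Lipschitz bounds of Assumption~\ref{as0} and the $C^2$ hypothesis (ii) on $g^i$) that the approximated measure $\mathcal D_Y$ converges and the formal Schr\"odinger equation is the genuine generator limit; and second, carrying the saddle-point evaluation of the $x^i$-integral precisely enough that the $u^i$ first-order condition emerges in the stated algebraic normalization rather than in a superficially different but equivalent rearrangement. A secondary point is to justify interchanging minimization over $x^i$, the $\varepsilon\downarrow 0$ limit, and stationarity over $u^i$, i.e.\ that the resulting joint critical point is genuinely the feedback Nash equilibrium of Definition~\ref{d1}; this is where the convexity of $g^i$ in $(x^i,u^i)$ and the concavity/Slater condition of Assumption~\ref{asf3}(ii) are used.
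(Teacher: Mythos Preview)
Your sketch is essentially the paper's own argument: penalize the action with $\lambda^i$, apply It\^o to $\lambda^i h^i$ to recognize the drift as $f^i$, carry out the short-time Gaussian expansion of~(\ref{lin8}) to reach the Wick-rotated Schr\"odinger identity $\partial_s\Psi^i=\big[(f^i_x)^2/(f^i_{xx})^2-f^i\big]\Psi^i$ (with $a^i=\tfrac12 f^i_{xx}$, $b^i=f^i_x$), and then differentiate that identity in $u^i$ to obtain $f^i_u(f^i_{xx})^2=2f^i_xf^i_{xu}$. The only deviation is your Fourier-transform step: the paper does \emph{not} solve the Schr\"odinger equation here but differentiates it directly in $u^i$ (the Fourier solution is the content of the next proposition), so that extra step is unnecessary but harmless for the present statement.
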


\begin{proof}
	Equation (\ref{1}) implies
	\begin{align}\label{w14}
	x^i(s+ds)-x^i(s)&=\mu^i[s,x^i(s),u^i(s)]\ ds+\sigma^i[s,x^i(s),u^i(s)]\ dB^i(s).
	\end{align}
	Following \cite{chow1996} we get our Euclidean action function as 
	\begin{align}
	\mathcal A_{0,t}^i(x^i)&=\int_0^t\E_s\bigg\{g^i(s,\mathbf x,x_0^i,u^i)ds+\lambda^i(s)\big[dx^i(s)-\mu^i(s,x^i,u^i)ds-\sigma^i(s,x^i,u^i)dB^i(s)\big]\bigg\},\notag
	\end{align}
	where $E_s$ is the conditional expectation on opinion $x^i(s)$ at the beginning of time $s$. Now, for a small change in time $\Delta s=\varepsilon>0$, and for agent $i$'s normalizing constant $L_\varepsilon^i>0$ , define a transitional wave function in small time interval as
	\begin{align}\label{w16}
	\Psi_{s,s+\varepsilon}^i(x^i)&=\frac{1}{L_\varepsilon^i} \int_{\mathbb{R}} \exp\biggr\{-\varepsilon  \mathcal{A}_{s,s+\varepsilon}^i(x)\biggr\} \Psi_s^i(x^i) dx^i(s),
	\end{align}	
	for $\epsilon\downarrow 0$ and $\Psi_s^i(x^i)$ is the value of the transition function at time $s$ and opinion $x^i(s)$ with the initial condition $\Psi_0^i(x^i)=\Psi_0^i$ for all $i\in N$.
	
	For the small time interval $[s,\tau]$ where $\tau=s+\varepsilon$  the Lagrangian can be represented as,
	\begin{align}\label{action}
	\mathcal{A}_{s,\tau}^i(x)&= \int_{s}^{\tau}\ \E_s\ \biggr\{ g^i[\nu,\mathbf x(\nu),x_0^i,u^i(\nu)]\ d\nu+\lambda^i(\nu)\ \big[x^i(\nu+d\nu)-x^i(\nu)\notag\\&\hspace{2cm}-\mu^i[\nu,x^i(\nu),u^i(\nu)]\ d\nu-\sigma^i[\nu,x^i(\nu),u^i(\nu)]\ dB^i(\nu)\big] \biggr\},
	\end{align}
	with the initial condition $x^i(0)=x_0^i$.	This conditional expectation is valid when the control $u^i(\nu)$ of agent $i$'s opinion dynamics is determined at time $\nu$ and the opinions of all $n$-agents $\mathbf x(\nu)$ is given \citep{chow1996}. The evolution of a process takes place as if the action function is stationary. Therefore, the conditional expectation with respect to time only depends on the expectation of initial time point of interval $[s,\tau]$.
	
	Define $\Delta x^i(\nu)=x^i(\nu+d\nu)-x^i(\nu)$, then Fubini's Theorem implies,
	\begin{align}\label{action5}
	\mathcal{A}_{s,\tau}^i(x^i)&= \E_s\ \bigg\{ \int_{s}^{\tau}\ g^i[\nu,\mathbf x(\nu),x_0^i,u^i(\nu)]\ d\nu+\lambda^i(\nu)\ \big[\Delta x^i(\nu)\notag\\&\hspace{2cm}-\mu^i[\nu,x^i(\nu),u^i(\nu)]\ d\nu-\sigma^i[\nu,x^i(\nu),u^i(\nu)]\ dB^i(\nu)\big] \bigg\}.
	\end{align}
	By It\^o's Theorem there exists a function $h^i[\nu,x^i(\nu)]\in C^2([0,\infty)\times\mathbb{R})$ such that  $Y^i(\nu)=h^i[\nu,x^i(\nu)]$ where $Y^i(\nu)$ is an It\^o process \citep{oksendal2003}. After assuming \[
	h^i[\nu+\Delta \nu,x^i(\nu)+\Delta x^i(\nu)]= \Delta x^i(\nu)-\mu^i[\nu,x^i(\nu),u^i(\nu)]\ d\nu-\sigma^i[\nu,x^i(\nu),u^i(\nu)]\ dB^i(\nu),
	\]
	Equation (\ref{action5}) becomes,
	\begin{align}\label{action6}
	\mathcal{A}_{s,\tau}^i(x^i)&=\E_s \bigg\{ \int_{s}^{\tau}\ g^i[\nu,\mathbf x(\nu),x_0^i,u^i(\nu)]\ d\nu+\lambda^i h^i\left[\nu+\Delta \nu,x^i(\nu)+\Delta x^i(\nu)\right]\bigg\}.
	\end{align}
	For a very small interval around time point $s$ with $\varepsilon\downarrow 0$, and It\^o's Lemma yields,
	\begin{align}\label{action7}
	\varepsilon\mathcal{A}_{s,\tau}^i(x^i)&= \E_s \bigg\{\varepsilon g^i[s,\mathbf x(s),x_0^i,u^i(s)]+ \varepsilon\lambda^ih^i[s,x^i(s)]+ \varepsilon\lambda^ih_s^i[s,x^i(s)] \notag\\&\hspace{.25cm}+\varepsilon\lambda^ih_x^i[s,x^i(s)]\mu^i[s,x^i(s),u^i(s)] +\varepsilon\lambda^ih_x^i[s,x^i(s)]\sigma^i[s,x^i(s),u^i(s)] \Delta B^i(s)\notag\\&\hspace{.5cm}+\mbox{$\frac{1}{2}$}\varepsilon\lambda^i(\sigma^{i}[s,x^i(s),u^i(s)])^2h_{xx}^i[s,x^i(s)]+o(\varepsilon)\bigg\},
	\end{align}
	where $h_s^i=\frac{\partial}{\partial s} h^i$, $h_x^i=\frac{\partial}{\partial x^i} h^i$ and $h_{xx}^i=\frac{\partial^2}{\partial (x^i)^2} h^i$, and we use the condition $[\Delta x^i(s)]^2=\varepsilon$ with $\Delta x^i(s)=\varepsilon\mu^i[s,x^i(s),u^i(s)]+\sigma^i[s,x^i(s),u^i(s)]\Delta B^i(s)$. We use It\^o's Lemma and a similar approximation to approximate the integral. With $\varepsilon\downarrow 0$, dividing throughout by  $\varepsilon$ and taking the conditional expectation we get,
	\begin{align}\label{action8}
	\varepsilon\mathcal{A}_{s,\tau}^i(x^i)&= \E_s \bigg\{\varepsilon g^i[s,\mathbf x(s),x_0^i,u^i(s)]+\varepsilon\lambda^ih^i[s,x^i(s)]+ \varepsilon\lambda^ih_s^i[s,x^i(s)]\notag\\&\hspace{.25cm}+\varepsilon\lambda^ih_x^i[s,x^i(s)]\mu^i[s,x^i(s),u^i(s)]+\mbox{$\frac{1}{2}$}\varepsilon\lambda^i\sigma^{2i}[s,x^i(s),u^i(s)]h_{xx}^i[s,x^i(s)]+o(1)\bigg\},
	\end{align}
	as $\E_s[\Delta B^i(s)]=0$ and $\E_s[o(\varepsilon)]/\varepsilon\ra 0$ as $\varepsilon\downarrow 0$ with the initial condition $x_0^i$. For $\varepsilon\downarrow 0$ the transition function at $s$ is $\Psi_s^i(x^i)$ for all $i\in N$. Hence, using Equation (\ref{w16}), the transition function for $[s,\tau]$ is
	\begin{multline}\label{action9}
	\Psi_{s,\tau}^i(x^i)=\frac{1}{L_\epsilon^i}\int_{\mathbb{R}} \exp\biggr\{-\varepsilon \big[g^i[s,\mathbf x(s),x_0^i,u^i(s)]+\lambda^ih^i[s,x^i(s)]\\ +\lambda^ih_s^i[s,x^i(s)] +\lambda^ih_x^i[s,x^i(s)]\mu^i[s,x^i(s),u^i(s)]\\+\mbox{$\frac{1}{2}$}\lambda^i(s)(\sigma^{i}[s,x^i(s),u^i(s)])^2h_{xx}^i[s,x^i(s)]\big]\biggr\} \Psi_s^i(x) dx^i(s)+o(\varepsilon^{1/2}).
	\end{multline}
	As $\varepsilon\downarrow 0$, first order Taylor series expansion on the left hand side of Equation (\ref{action9}) gives
	\begin{multline}\label{action10}
	\Psi_{is}(x^i)+\varepsilon  \frac{\partial \Psi_{is}(x^i) }{\partial s}+o(\varepsilon)\\=\frac{1}{L_\varepsilon^i}\int_{\mathbb{R}} \exp\biggr\{-\varepsilon \big[g^i[s,\mathbf x(s),x_0^i,u^i(s)]+\lambda^i(s)h^i[s,x^i(s)] \\+\lambda^ih_s^i[s,x^i(s)]+\lambda^ih_x^i[s,x^i(s)]\mu^i[s,x^i(s),u^i(s)]\\+\mbox{$\frac{1}{2}$}\lambda^i(\sigma^{i}[s,x^i(s),u^i(s)])^2h_{xx}^i[s,x^i(s)]\big]\biggr\} \Psi_s^i(x) dx^i(s)+o(\varepsilon^{1/2}).
	\end{multline}
	For fixed $s$ and $\tau$ let $x^i(s)-x^i(\tau)=\xi^i$ so that $x^i(s)=x^i(\tau)+\xi^i$. When $\xi^i$ is not around zero, for a positive number $\eta<\infty$ we assume $|\xi^i|\leq\sqrt{\frac{\eta\varepsilon}{x^i(s)}}$ so that for $\varepsilon\downarrow 0$, $\xi^i$ takes even smaller values and agent $i$'s opinion $0<x^i(s)\leq\eta\varepsilon/(\xi^i)^2$. Therefore,
	\begin{multline*}
	\Psi_{is}(x^i)+\varepsilon\frac{\partial \Psi_{is}(x^i)}{\partial s}=\frac{1}{L_\epsilon^i}\int_{\mathbb{R}} \left[\Psi_{is}(x^i)+\xi^i\frac{\partial \Psi_{is}(x^i)}{\partial x^i}+o(\epsilon)\right]\\\times \exp\biggr\{-\varepsilon \big[g^i[s,\mathbf x(s),x_0^i,u^i(s)]+\lambda^ih^i[s,x^i(s)]+\lambda^ih_x^i[s,x^i(s)]\mu^i[s,x^i(s),u^i(s)]\\+\mbox{$\frac{1}{2}$}\lambda^i(\sigma^{i}[s,x^i(s),u^i(s)])^2h_{xx}^i[s,x^i(s)]\big]\biggr\} d\xi^i+o(\varepsilon^{1/2}).
	\end{multline*}
	Before solving for Gaussian integral of the each term of the right hand side of the above Equation define a $C^2$ function 
	\begin{align*}
	& f^i[s,\bm\xi,\lambda^i(s),u^i(s)]\notag\\&=g^i[s,\mathbf x(s)+\bm{\xi},x_0^i,u^i(s)]+\lambda^ih^i[s,x^i(s)+\xi^i] +\lambda^ih_s^i[s,x^i(s)+\xi^i]\notag\\&\hspace{.25cm}+\lambda^ih_x^i[s,x^i(s)+\xi^i]\mu^i[s,x^i(s)+\xi^i,u^i(s)]+\mbox{$\frac{1}{2}$}\lambda^i\sigma^{2i}[s,x^i(s)+\xi^i,u^i(s)]h_{xx}^i[s,x^i(s)+\xi^i]+o(1),
	\end{align*}
	where $\bm{\xi}$ is a vector of all $n$-agents' $\xi^i$'s.  Hence,
	\begin{align}\label{action13}
	\Psi_{is}(x^i)+\varepsilon \frac{\partial \Psi_{is}(x^i) }{\partial s}&=\Psi_{is}(x^i)\frac{1}{L_\epsilon^i}\int_{\mathbb{R}}\exp\left\{-\varepsilon f^i[s,\bm\xi,\lambda^i(s),u^i(s)] \right\}d\xi^i\notag\\&+\frac{\partial \Psi_{is}(x^i)}{\partial x^i}\frac{1}{L_\epsilon^i}\int_{\mathbb{R}}\xi^i\exp\left\{-\varepsilon f^i[s,\bm\xi,\lambda^i(s),u^i(s)] \right\}d\xi^i+o(\varepsilon^{1/2}).
	\end{align}
	After taking $\varepsilon\downarrow 0$, $\Delta u\downarrow0$ and a Taylor series expansion with respect to $x^i$ of $f^i[s,\bm\xi,\lambda^i,u^i(s)]$ gives, 
	\begin{align*}
	f^i[s,\bm\xi,\lambda^i,u(s)]&=f^i[s,\mathbf x(\tau),\lambda^i,u^i(s)]+f_x^i[s,\mathbf x(\tau),\lambda^i,u^i(s)][\xi^i-x^i(\tau)]\notag\\&\hspace{1cm}+\mbox{$\frac{1}{2}$}f_{xx}^i[s,\mathbf x(\tau),\lambda^i,u^i(s)][\xi^i-x^i(\tau)]^2+o(\varepsilon).
	\end{align*}
	Define $m^i=\xi^i-x^i(\tau)$ so that $ d\xi^i=dm^i$. First integral on the right hand side of Equation (\ref{action13}) becomes,
	\begin{align}\label{action14}
	&\int_{\mathbb{R}} \exp\big\{-\varepsilon f^i[s,\bm\xi,\lambda^i,u^i(s)]\} d\xi^i\notag\\&=\exp\big\{-\varepsilon f^i[s,\mathbf x(\tau),\lambda^i,u^i(s)]\big\}\notag\\&\hspace{1cm}\int_{\mathbb{R}} \exp\biggr\{-\varepsilon \biggr[f_x^i[s,\mathbf x(\tau),\lambda^i,u^i(s)]m^i+\mbox{$\frac{1}{2}$}f_{xx}^i[s,\mathbf x(\tau),\lambda^i,u^i(s)](m^i)^2\biggr]\biggr\} dm^i.
	\end{align}
	Assuming  $a^i=\frac{1}{2} f_{xx}^i[s,\mathbf x(\tau),\lambda^i,u^i(s)]$ and $b^i=f_x^i[s,\mathbf x(\tau),\lambda^i,u^i(s)]$ the argument of the exponential function in Equation (\ref{action14}) becomes,
	\begin{align}\label{action15}
	a^i(m^i)^2+b^im^i&=a^i\left[(m^i)^2+\frac{b^i}{a^i}m^i\right]=a^i\left(m^i+\frac{b^i}{2a^i}m^i\right)^2-\frac{(b^i)^2}{4(a^i)^2}.
	\end{align}
	Therefore,
	\begin{align}\label{action16}
	&\exp\big\{-\varepsilon f^i[s,\mathbf x(\tau),\lambda^i,u^i(s)]\big\}\int_{\mathbb{R}} \exp\big\{-\varepsilon [a^i(m^i)^2+b^im^i]\big\}dm^i\notag\\&=\exp\left\{\varepsilon \left[\frac{(b^i)^2}{4(a^i)^2}-f^i[s,\mathbf x(\tau),\lambda^i,u^i(s)]\right]\right\}\int_{\mathbb{R}} \exp\left\{-\left[\varepsilon a^i\left(m^i+\frac{b^i}{2a^i}m^i\right)^2\right]\right\} dm^i\notag\\&=\sqrt{\frac{\pi}{\varepsilon a^i}}\exp\left\{\varepsilon \left[\frac{(b^i)^2}{4(a^i)^2}-f^i[s,\mathbf x(\tau),\lambda^i,u^i(s)]\right]\right\},
	\end{align}
	and
	\begin{align}\label{action17}
	&\Psi_{is}(x^i) \frac{1}{L_\varepsilon^i} \int_{\mathbb{R}} \exp\big\{-\varepsilon f^i[s,\bm\xi,\lambda^i,u^i(s)]\} d\xi^i\notag\\ &=\Psi_{is}(x) \frac{1}{L_\varepsilon^i} \sqrt{\frac{\pi}{\varepsilon a^i}}\exp\left\{\varepsilon \left[\frac{(b^i)^2}{4(a^i)^2}-f^i[s,\mathbf x(\tau),\lambda^i,u^i(s)]\right]\right\}. 
	\end{align}
	Substituting $\xi^i=x^i(\tau)+m^i$ second integrand of the right hand side of Equation (\ref{action13}) yields,
	\begin{align}\label{action18}
	& \int_{\mathbb{R}} \xi^i \exp\left[-\varepsilon \{f^i[s,\bm\xi,\lambda^i,u^i(s)]\}\right] d\xi^i\notag\\&=\exp\{-\varepsilon f^i[s,\mathbf x(\tau),\lambda^i,u^i(s)]\}\int_{\mathbb{R}} [x^i(\tau)+m^i] \exp\left[-\varepsilon \left[a^i(m^i)^2+b^im^i\right]\right] dm^i\notag\\&=\exp\left\{\varepsilon \left[\frac{(b^i)^2}{4(a^i)^2}-f^i[s,\mathbf x(\tau),\lambda^i,u^i(s)]\right]\right\} \biggr[x^i(\tau)\sqrt{\frac{\pi}{\varepsilon a^i}}\notag\\&\hspace{1cm}+\int_{\mathbb{R}} m^i \exp\left\{-\varepsilon \left[a^i\left(m^i+\frac{b^i}{2a^i}m^i\right)^2\right]\right\} dm^i\biggr].
	\end{align}
	Substituting $k^i=m^i+b^i/(2a^i)$ in Equation (\ref{action18}) we get,
	\begin{align}\label{action19}
	&\exp\left\{\varepsilon \left[\frac{(b^i)^2}{4(a^i)^2}-f^i[s,\mathbf x(\tau),\lambda^i,u^i(s)]\right]\right\} \biggr[x^i(\tau)\sqrt{\frac{\pi}{\varepsilon a^i}}+\int_{\mathbb{R}} \left(k^i-\frac{b^i}{2a^i}\right) \exp[-a^i\varepsilon (k^i)^2] dk^i\biggr]\notag\\&=\exp\left\{\varepsilon \left[\frac{(b^i)^2}{4(a^i)^2}-f^i[s,\mathbf x(\tau),\lambda^i,u^i(s)]\right]\right\} \biggr[x^i(\tau)-\frac{b^i}{2a^i}\biggr]\sqrt{\frac{\pi}{\varepsilon a^i}}.
	\end{align}
	Hence,
	\begin{align}\label{action20}
	&\frac{1}{L_\varepsilon^i}\frac{\partial \Psi_{is}(x^i)}{\partial x^i}\int_{\mathbb{R}} \xi^i \exp\left[-\varepsilon f[s,\bm\xi,\lambda^i,u^i(s)]\right] d\xi^i\notag\\&=\frac{1}{L_\varepsilon^i}\frac{\partial \Psi_{is}(x^i)}{\partial x^i} \exp\left\{\varepsilon \left[\frac{(b^i)^2}{4(a^i)^2}-f^i[s,\mathbf x(\tau),\lambda^i,u^i(s)]\right]\right\} \biggr[x^i(\tau)-\frac{b^i}{2a^i}\biggr]\sqrt{\frac{\pi}{\varepsilon a^i}}.
	\end{align}
	Using results of Equations (\ref{action17}), and (\ref{action20})  into Equation (\ref{action13}) we get,
	\begin{align}\label{action24}
	& \Psi_{is}(x^i)+\varepsilon \frac{\partial \Psi_{is}(x^i)}{\partial s}\notag\\&=\frac{1}{L_\varepsilon^i} \sqrt{\frac{\pi}{\varepsilon a^i}}\Psi_{is}(x^i) \exp\left\{\varepsilon \left[\frac{(b^i)^2}{4(a^i)^2}-f^i[s,\mathbf x(\tau),\lambda^i,u^i(s)]\right]\right\}\notag\\&+\frac{1}{L_\varepsilon^i}\frac{\partial \Psi_{is}(x^i)}{\partial x^i} \sqrt{\frac{\pi}{\varepsilon a^i}} \exp\left\{\varepsilon \left[\frac{(b^i)^2}{4(a^i)^2}-f^i[s,\mathbf x(\tau),\lambda^i,u^i(s)]\right]\right\} \biggr[x^i(\tau)-\frac{b^i}{2a^i}\biggr]+o(\varepsilon^{1/2}).
	\end{align}
	As $f^i$ is in Schwartz space, derivatives are rapidly falling  and  assuming $0<|b^i|\leq\eta\varepsilon$, $0<|a^i|\leq\mbox{$\frac{1}{2}$}[1-(\xi^i)^{-2}]^{-1}$ and $x^i(s)-x^i(\tau)=\xi^i$ we get,
	\begin{align*}
	x^i(\tau)-\frac{b^i}{2a^i}=x^i(s)-\xi^i-\frac{b^i}{2a^i}=x^i(s)-\frac{b^i}{2a^i},
	\end{align*}
	such that 
	\begin{align*}
	\bigg|x^i(s)-\frac{b^i}{2a^i}\bigg|=\bigg|\frac{\eta\varepsilon}{(\xi^i)^2}-\eta\varepsilon\left[1-\frac{1}{(\xi^i)^2}\right]\bigg|\leq\eta\varepsilon.
	\end{align*}
	Therefore, Wick-rotated Schr\"odinger type Equation for agent $i$ is,
	\begin{align}\label{action25.4}
	\frac{\partial \Psi_{is}(x)}{\partial s}&=\left[\frac{(b^i)^2}{4(a^i)^2}-f^i[s,\mathbf x(\tau),\lambda^i(s),u^i(s)]\right]\Psi_{is}(x).
	\end{align}
	Differentiating the Equation (\ref{action25.4}) with respect to $u^i$ gives us optimal control of agent $i$ under this stochastic opinion dynamics which is
	\begin{align}\label{w18}
	\left\{\frac{2f_x^i}{f_{xx}^i}\left[\frac{f_{xx}^if_{xu}^i-f_x^if_{xxu}^i}{(f_{xx}^i)^2}\right]-f_u^i\right\}\Psi_{is}(x)=0,
	\end{align}
	where $f_x^i=\frac{\partial}{\partial x^i} f^i$, $f_{xx}^i=\frac{\partial^2}{\partial (x^i)^2} f^i$, $f_{xu}^i=\frac{\partial^2}{\partial x^i\partial u^i} f^i$ and $f_{xxu}^i=\frac{\partial^3}{\partial (x^i)^2\partial u^i} f^i=0$. Therefore, optimal feedback control of agent $i$ in stochastic opinion dynamics is represented as $\phi^{i*}(s,x^i)$ and is found by setting Equation (\ref{w18}) equal to zero. Hence, $\phi^{i*}(s,x^i)$ is the solution of the following Equation
	\begin{align}\label{w21}
	f_u^i (f_{xx}^i)^2=2f_x^i f_{xu}^i.
	\end{align}
\end{proof}

\begin{prop}\label{p1}
	For the initial condition $\Psi_0^i(x^i)=I^i(x^i)$ the Wick-rotated Schr\"odinger-type equation of agent $i\in N$ 
	\begin{align}
	\frac{\partial\Psi_{is}(x^i)}{\partial s}&=\left[\frac{(b^i)^2}{4(a^i)^2}-f^i[s,\mathbf x(s),\lambda^i,u^i(s)]\right]\Psi_{is}(x^i),\notag
	\end{align}
	where $a^i=\frac{1}{2} \frac{\partial^2}{\partial (x^i)^2}f^i[s,\mathbf x(s),\lambda^i,u^i(s)]$ and $b^i=\frac{\partial}{\partial x^i}f^i[s,\mathbf x(s),\lambda^i,u^i(s)]$, has a unique solution 
	\begin{align}
	\Psi_{is}(x)&=I^i(x^i)\exp\left\{s\left[\frac{(b^i)^2}{4(a^i)^2}-f^i[s,\mathbf x(s),\lambda^i,u^i(s)]\right]\right\}.\notag
	\end{align}
	The optimal opinion $x^{i*}$ can be found after solving the following equation,
	\begin{align}\label{op0}
	\frac{\partial^2}{\partial s\partial x^i}f^i[s,\mathbf x(s),\lambda^i,u^i(s)]&=\frac{\partial}{\partial x^i}f^i[s,\mathbf x(s),\lambda^i,u^i(s)], 
	\end{align}
	and corresponding feedback control Nash equilibrium is $\phi^{i*}(s,x^{i*})$.
\end{prop}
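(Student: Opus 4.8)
The plan is to establish the three claims of the proposition in sequence, recycling the apparatus already assembled in the proof of Proposition~\ref{p0}.

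For the closed form of $\Psi_{is}$, note that for a fixed opinion $x^i$ the Wick-rotated Schr\"odinger-type equation is a linear first-order ordinary differential equation in the time variable $s$, namely $\partial_s\Psi_{is}=F^i(s,x^i)\,\Psi_{is}$ with $F^i:=(b^i)^2/\big(4(a^i)^2\big)-f^i[s,\mathbf x(s),\lambda^i,u^i(s)]$ and $a^i,b^i$ as in the statement. I would invoke the stationarity of the Euclidean action already exploited in Proposition~\ref{p0} --- the evolution proceeding ``as if the action function is stationary'' --- to treat $F^i$ as independent of $s$ along the equilibrium trajectory; a direct integration then gives $\Psi_{is}=C^i(x^i)\exp\{sF^i\}$, and the initial condition $\Psi_0^i(x^i)=I^i(x^i)$ forces $C^i(x^i)=I^i(x^i)$, which is the asserted formula. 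Uniqueness is then routine: the quotient of any two solutions has vanishing $s$-derivative and equals $1$ at $s=0$, so they coincide (equivalently, apply Gr\"onwall's inequality to their difference).

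For the optimal opinion I would mimic the device in Proposition~\ref{p0} that produced the optimal control by differentiating the Schr\"odinger-type equation~(\ref{action25.4}) --- only now differentiating with respect to $x^i$ rather than $u^i$, inserting the closed-form $\Psi_{is}$ from the first step, and using that $f^i$ lies in Schwartz space so that the superfluous higher-order derivative terms (the analogue of the vanishing $f_{xxu}^i$ used to derive~(\ref{w21})) are negligible. Dividing through by the nowhere-vanishing factor $\Psi_{is}(x)$ and retaining the surviving terms leaves the stationarity condition $\partial^2 f^i/\partial s\,\partial x^i=\partial f^i/\partial x^i$, i.e.\ Equation~(\ref{op0}), whose solution is $x^{i*}(s)$, the relevant branch being fixed by the boundary datum $x^{i*}(0)=x_0^i$ recorded in Proposition~\ref{p0}(b).

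It then remains only to assemble the equilibrium: Proposition~\ref{p0} already characterises the optimal feedback control as the root $\phi^{i*}(s,x^i)$ of $f_u^i(f_{xx}^i)^2=2f_x^i f_{xu}^i$, so evaluating this root along the optimal opinion $x^{i*}$ from the previous step yields $\phi^{i*}(s,x^{i*})$, and one checks that for every $i\in N$ the pair $\big(x^{i*},\phi^{i*}(s,x^{i*})\big)$ satisfies the cost inequality defining a dynamic stochastic equilibrium in Definition~\ref{d1}; since this minimises each agent's conditional expected cost given the others' strategies, it is the claimed feedback Nash equilibrium. The main obstacle is the first step: passing from the genuinely $s$-dependent coefficient $F^i(s,x^i)$ to the constant-coefficient form that yields $\exp\{sF^i\}$ rather than $\exp\{\int_0^s F^i\,d\tau\}$ hinges entirely on the stationarity-of-the-action heuristic carried over from Proposition~\ref{p0}, and turning that heuristic into a rigorous statement is the crux; the Schwartz-space truncation in the second step is a close second, since it must legitimately discard exactly the type of higher-order term that was dropped in deriving~(\ref{w21}).
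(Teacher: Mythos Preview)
Your proposal reaches the right conclusions but by a markedly more direct route than the paper's. For the solution formula, the paper does \emph{not} integrate the scalar ODE $\partial_s\Psi_{is}=F^i\Psi_{is}$ directly. Instead it embeds the problem in a three-term generalized equation
\[
\partial_s\Psi_{is}=v^i\Psi_{is}+z^i\partial_{x^i}\Psi_{is}+w^i\partial_{x^i}^2\Psi_{is},
\]
applies the Fourier transform in $x^i$, solves the transformed ODE with an integrating factor (here is where the paper makes the same $s$-independence move you flag, stated as ``$v^i,z^i,w^i$ vary according to the movement of $x^i$'s only''), inverts via the convolution theorem, and only then specializes to $z^i=w^i=0$ so that the inverse transform collapses to a Dirac delta and yields $\Psi_{is}=I^i\exp(sv^i)$. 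Your bare ODE argument gets to the same place without the Fourier machinery; what the paper's detour buys is a general solution formula (its Equation~(\ref{action40})) valid when $z^i,w^i\neq 0$, which is not needed here but may be intended for reuse.

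For Equation~(\ref{op0}) there is a genuine difference of order-of-operations. The paper first substitutes the closed form $\Psi_{is}=I^i\exp(sv^i)$ back into the Schr\"odinger-type equation to extract the \emph{intermediate} relation $\partial_s f^i=\partial_{x^i}f^i$, and only then differentiates in $x^i$ to obtain~(\ref{op0}). Your plan differentiates the equation in $x^i$ first and then substitutes, relying on a Schwartz-space truncation to kill the extra terms. Both routes lean on heuristics at the same point, but if you want to match the paper you should pass through $\partial_s f^i=\partial_{x^i}f^i$ explicitly; otherwise your ``drop the higher-order terms'' step is doing more work than the analogous step in Proposition~\ref{p0}, since here you must suppress terms involving $\partial_{x^i}\Psi_{is}$ that the paper never generates.
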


\begin{proof}
	Let for three variables $v^i[x^i(s),u^i(s)], z^i[x^i(s),u^i(s)]$ and $w^i[x^i(s),u^i(s)]$ generalized Wick-rotated Schr\"odinger type equation for agent $i$ is,
	\begin{align}\label{action27}
	\frac{\partial \Psi_{is}(x^i)}{\partial s}=v^i[x^i(s),u^i(s)]\Psi_{is}(x^i)+z^i[x^i(s),u^i(s)]\frac{\partial \Psi_{is}(x^i)}{\partial x^i}+w^i[x^i(s),u^i(s)]\frac{\partial^2 \Psi_{is}(x^i)}{\partial (x^i)^2},
	\end{align}
	with the initial condition $\Psi_0^i(x^i)=I^i(x^i)$. As agent $i$'s wave function $\Psi_{is}(x^i)$ is a function of opinion $x^i(s)$ for fixed control $u^i(s)$, the solution to Equation (\ref{action27}) is found by assuming $v^i, z^i$ and $w^i$ vary according to the movement of $x^i$'s only. Define $\Psi_{is;s}(x^i)=\frac{\partial}{\partial s}\Psi_{is}(x^i)$, $\Psi_{is;x}(x^i)=\frac{\partial}{\partial x^i}\Psi_{is}(x^i)$ and $\Psi_{is;xx}(x^i)=\frac{\partial^2}{\partial (x^i)^2}\Psi_{is}(x^i)$. Hence,
	\begin{align}\label{action280}
	\Psi_{is;s}(x^i)&=v^i(x^i,u^i)\Psi_{is}(x^i)+z^i(x^i,u^i)\Psi_{is;x}(x^i)+w^i(x^i,u^i)\Psi_{is;xx}(x^i).
	\end{align}
	For a $\tilde{\xi}\in\mathbb{R}$, the Fourier transformation of $\Psi_{is}(x^i)$ is,
	\begin{align}\label{action29}
	{B}\{\Psi_{is}(x^i)\}=\overline{\Psi}_s(\tilde{\xi})=\int_{\mathbb{R}} \Psi_{is}(x^i) \exp\big\{-\mathfrak{i}\tilde{\xi}x^i\big\} dx^i.
	\end{align} 
	As ${B}\{\Psi_{is;x}(x^i)\}=\int_{\mathbb{R}} \frac{\partial }{\partial x^i}\Psi_{is}(x^i) \exp\{-\mathfrak{i}\tilde{\xi}x^i\} dx^i$ then assuming  $\Psi_{is}(x^i)\downarrow 0$ as $x^i\ra\pm\infty$, Equation (\ref{action29}) gives, ${B}\{\Psi_{is;x}(x^i)\}=\mathfrak{i}\tilde{\xi} {B}\{\Psi_{is}(x^i)\}$. Therefore, ${B}\{\Psi_{is;x}(x^i)\}=\mathfrak{i}\tilde{\xi} {B}\{\Psi_{is}(x^i)\}$ and, ${B}\{\Psi_{is;xx}(x^i)\}=\mathfrak{i}\tilde{\xi}{B}\{\Psi_{is;x}(x^i)\}=-\tilde{\xi}^2 {B}\{\Psi_{is}(x^i)\}$. Rearranging terms in Equation (\ref{action280}) and  Fourier transformation with above conditions give,
	\begin{align}\label{action31}
	&\Psi_{is;s}(x^i)-v^i(x^i,u^i)\Psi_{is}(x^i)-z^i(x^i,u^i)\Psi_{is;x}(x^i)-w^i(x^i,u^i)\Psi_{is;xx}(x^i)=0\notag\\& \frac{\partial \overline{\Psi}_{is}(\tilde{\xi})}{\partial s}+\overline{\Psi}_{is}(\tilde{\xi})\big[w^i(x^i,u^i)\tilde{\xi}^2-z^i(x^i,u^i)\mathfrak{i}\tilde{\xi}-v^i(x^i,u^i)\big]=0.
	\end{align}
	Let us assume an integrating factor $\exp\left\{\int [w^i(x^i,u^i)\tilde{\xi}^2-z^i(x^i,u^i)\mathfrak{i}\tilde{\xi}-v^i(x^i,u^i)] ds\right\}$ which can be written as $\exp\left\{s[w^i(x^i,u^i)\tilde{\xi}^2-z^i(x^i,u^i)\ \mathfrak{i}\tilde{\xi}-v^i(x^i,u^i)]\right\}.$ Therefore,
	\begin{align}
	& \exp\left\{s\left[w^i\tilde{\xi}^2- z \mathfrak{i}\tilde{\xi}-v^i\right]\right\}\left\{\mbox{$\frac{\partial}{\partial s}$}\overline{\Psi}_{is}(\tilde{\xi})+\overline{\Psi}_{is}(\tilde{\xi})\big[w^i\tilde{\xi}^2-z^i\mathfrak{i}\tilde{\xi}-v^i\big]\right\}=0,\notag
	\end{align}
	or equivalently
	\begin{multline*}
	\mbox{$\frac{\partial}{\partial s}$}\overline{\Psi}_{is}(\tilde{\xi}) \exp\left\{s\left[w^i\tilde{\xi}^2- z^i \mathfrak{i}\tilde{\xi}-v^i\right]\right\}+\\\left\{\overline{\Psi}_{is}(\tilde{\xi})\big[w^i\tilde{\xi}^2-z^i\mathfrak{i}\tilde{\xi}-v^i\big]\right\} \exp\left\{s\left[w^i\tilde{\xi}^2-z^i \mathfrak{i}\tilde{\xi}-v^i\right]\right\}=0,
	\end{multline*}	
	so that
	\begin{align}\label{action32}
	\mbox{$\frac{\partial}{\partial s}$} \exp\left\{s\left[w^i\tilde{\xi}^2-z^i \mathfrak{i}\tilde{\xi}-v^i\right]\right\} \overline{\Psi}_{is}(\tilde{\xi})=0.
	\end{align}
	Integrating both sides of Equation (\ref{action32}) yields,
	\begin{align}\label{action33}
	\exp\left\{s\left[w^i\tilde{\xi}^2-z^i \mathfrak{i}\tilde{\xi}-v^i\right]\right\} \overline{\Psi}_{is}(\tilde{\xi})=c^i(\tilde{\xi})\ \text{and,}\notag\\ \overline{\Psi}_{is}(\tilde{\xi})=c^i(\tilde{\xi})\ \exp\left\{-s\left[w^i\tilde{\xi}^2- z^i \mathfrak{i}\tilde{\xi}-v^i\right]\right\}.
	\end{align}
	Applying the Fourier transformation on the initial condition yields,
	$\overline{\Psi}_0^i(\tilde{\xi})=\overline{I}^i(\tilde{\xi})$ which implies $c^i(\tilde{\xi})\equiv \overline{I}^i{(\tilde{\xi})}$.
	Using this condition  Equation (\ref{action33}) gives,
	\begin{align}\label{action35}
	\overline{\Psi}_{is}(\tilde{\xi})&=\overline{I}^i(\tilde{\xi}) \exp\left\{-s\left[w^i\tilde{\xi}^2-z^i \mathfrak{i}\tilde{\xi}-v^i\right]\right\}=\overline{I}^i(\tilde{\xi}) \overline {\Phi}^i(s,\tilde{\xi}),
	\end{align}
	where $\overline {\Phi}^i(s,\tilde{\xi})=\exp\left\{-s\left[w^i\tilde{\xi}^2- z^i \mathfrak{i}\tilde{\xi}-v^i\right]\right\}$ for all $i\in N$. Fourier Inversion Theorem yields,
	\begin{align}\label{action36}
	\Phi^i(s,x^i)&=\frac{1}{2\pi} \exp\left[\frac{(sz^i-x^i)^2}{4s^2(w^i)^2}+sv^i\right]\sqrt{\frac{\pi}{sw^i}},\ \forall i\in N.
	\end{align}
	As the Fourier transformation $\overline{\Psi}_{is}(\tilde{\xi})=\overline{I}^i(\tilde{\xi}) \overline{\Phi}^i(s,\tilde{\xi})$ is the product of two Fourier transformations, therefore the Convolution Theorem implies that for $I^i(x^i)$ and
	$\Phi^i[s,x^i(s)]$, 
	\[
	\overline{\Psi}_{is}(\tilde{\xi})={B}\left\{I^i[x^i(s)]*\Phi^i[s,x^i(s)]\right\},
	\]
	and 
	\[
	\Psi_{is}(x^i)=(I^i*\Phi^i)[s,x^i(s)]=\int_{\mathbb{R}} \Phi^i(s,x^i-y^i) I^i(y^i) dy^i,
	\] for all $y^i\in\mathbb{R}$. Hence, a solution to the Equation (\ref{action27}) is,
	\begin{multline}\label{action40}
	\Psi_{is}(x^i)=\int_{\mathbb{R}} \frac{1}{2\pi}\ \exp\left\{\frac{[sz^i(x^i-y^i,u^i)-(x^i-y^i)]^2}{4s^2(w^i)^2(x^i-y^i,u^i)}+sv^i(x^i-y^i,u^i)\right\}\\\times\sqrt{\frac{\pi}{sw^i(x^i-y^i,u^i)}} I^i(y^i) dy^i.
	\end{multline}
	If one compares Wick-rotated Schr\"odinger type Equation (\ref{action25.4}) with (\ref{action27}) we find out $v^i(x^i,u^i)=(b^i)^2/[4(a^i)^2]-f^i(s,x^i,u^i)$ and other terms vanishes. Therefore, Equation (\ref{action36}) becomes
	\begin{align}\label{action36.1}
	\Phi^i(s,x^i)&=\frac{1}{2\pi} \int_{\mathbb{R}} \exp(sv^i) \exp\left(\mathfrak{i}\tilde{\xi}x^i\right) d\tilde{\xi} =\exp(sv^i) \delta(x^i),
	\end{align}
	where $\delta(x^i)=\frac{1}{2\pi} \int_{\mathbb{R}} \exp\left(\mathfrak{i}\tilde{\xi}x^i\right) d\tilde{\xi}$ is the Dirac $\delta$-function of the opinion of agent $i$. Now,
	\begin{multline*}
	\overline{\Psi}_{is}(\tilde{\xi})=\int_{\mathbb{R}} \int_{\mathbb{R}} \exp(-\mathfrak{i}\tilde{\xi}x^i) I^i(y^i)*\Phi(x^i-y^i)dx^i dy^i\\ =\int_{\mathbb{R}} I^i(y^i) \left[\int_{\mathbb{R}} \exp(-\mathfrak{i}\tilde{\xi}x^i) \Phi(x^i-y^i) dx^i\right] dy^i.
	\end{multline*}
	Suppose, $u^i=x^i-y^i$ such that $du^i=dx^i$ for all $i\in N$. Then
	\begin{align}
	\overline{\Psi}_{is}(\tilde{\xi})&=\int_{\mathbb{R}} I^i(y^i) \left[\int_{\mathbb{R}} \exp(-\mathfrak{i}\tilde{\xi}u^i) \exp(-\mathfrak{i}\tilde{\xi}y^i) \Phi(u^i)du^i \right] dy^i\notag\\&=\left[\int_{\mathbb{R}} I^i(y^i) \exp(-\mathfrak{i}\tilde{\xi}y^i) dy^i\right] \left[\int_{\mathbb{R}} \Phi(u^i) \exp(-\mathfrak{i}\tilde{\xi}u^i) du^i \right]=B(I^i)*B(\Phi^i).\notag 
	\end{align}
	Therefore, the solution to Equation (\ref{action25.4}) is $\Psi_{is}(x^i)=I^i(x^i) \exp[sv^i(x^i,u^i)]$ where $v^i(x^i,u^i)=(b^i)^2/[4(a^i)^2]-f^i(s,x^i,u^i)$. After using this solution to the wave function $\Psi_{is}(x^i)$ into Wick-rotated Schr\"odinger type Equation (\ref{action25.4}) we get,
	\begin{align}
	\mbox{$\frac{\partial}{\partial s}$}f^i[s,\mathbf x(s),\lambda^i,u^i(s)]&=\mbox{$\frac{\partial}{\partial x^i}$}f^i[s,\mathbf x(s),\lambda^i,u^i(s)], \notag
	\end{align}
	and differentiating with respect to $x^i$ gives
	\begin{align}\label{action36.10}
	\mbox{$\frac{\partial}{\partial x^i}$}\left\{\mbox{$\frac{\partial}{\partial s}$}f^i[s,\mathbf x(s),\lambda^i,u^i(s)]\right\}&=\mbox{$\frac{\partial}{\partial x^i}$}f^i[s,\mathbf x(s),\lambda^i,u^i(s)].
	\end{align}
	Optimal opinion of agent $i$, $x^{i*}$ can be found after solving the Equation (\ref{action36.10}) and an optimal feedback control $\phi^{i*}(s,x^{i*})$ is obtained.
\end{proof}

\begin{cor}\label{c0}
	Define $\mathbf x^*=[x^{1*},x^{2*},...,x^{n*}]^T$ for all $i\in N$. As each player has an optimal opinion $x^{i*}$, $\mathbf x^*$ is an optimal opinion vector. Furthermore, $$\phi^*(s,\mathbf x^*)=\left[\phi^{1*}(s,x^{1*}),\phi^{2*}(s,x^{2*}),...,\phi^{n*}(s,x^{n*})\right]^T$$ is an optimal control vector of feedback Nash equilibrium.
\end{cor}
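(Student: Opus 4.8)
The plan is to derive the corollary directly from Propositions \ref{p0} and \ref{p1}, applied one agent at a time, and then to check that the resulting stacked profile satisfies the equilibrium inequality of Definition \ref{d1}. First I would fix $i\in N$ and invoke Proposition \ref{p1}: under Assumptions \ref{as0}--\ref{asf3} the integrand $g^i$ is $C^2$, and $\mu^i,\sigma^i$ are Lipschitz with linear growth, so the Wick-rotated Schr\"odinger-type equation for agent $i$ has the unique solution $\Psi_{is}(x^i)=I^i(x^i)\exp\{s[(b^i)^2/(4(a^i)^2)-f^i]\}$, whence the optimal opinion $x^{i*}$ is the solution of equation (\ref{op0}). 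Proposition \ref{p0} then produces, evaluated along that trajectory, the optimal feedback control $\phi^{i*}(s,x^{i*})$ as the solution of $f_u^i(f_{xx}^i)^2=2f_x^i f_{xu}^i$. Collecting these componentwise over $i=1,\dots,n$ \emph{defines} the vectors $\mathbf x^*=[x^{1*},\dots,x^{n*}]^T$ and $\phi^*(s,\mathbf x^*)=[\phi^{1*}(s,x^{1*}),\dots,\phi^{n*}(s,x^{n*})]^T$; what remains is to justify that these are \emph{optimal} in the game-theoretic sense.

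For the equilibrium property I would argue as follows. Fix agent $i$ and freeze every other agent's strategy at $\phi^{j*}$, $j\neq i$; then the interaction term $\sum_{j\in\eta_i}w_{ij}[x^i(s)-x^{j*}(s)]^2$ becomes an exogenous $C^2$ function of $x^i$ alone, so agent $i$ faces exactly the single-agent control problem solved in Propositions \ref{p0}--\ref{p1}. The path-integral reduction carried out there --- rewriting the conditional expected cost through the transition function $\Psi_{is}$ and extremizing the exponent --- shows that no $\{\mathcal F_s^x\}$-adapted admissible control attains a strictly smaller conditional cost than $\phi^{i*}$, which is precisely the inequality in Definition \ref{d1}. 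Since this holds for each $i\in N$ and, by Assumption \ref{asf1}, each control $u^i(s)=\phi^{i*}(s,x^{i*}(s))$ is computed from the current opinion at the same instant $s$, the profile $\{\gamma^{i*}=\phi^{i*};\,i\in N\}$ is a \emph{feedback} (Markovian), not open-loop, Nash equilibrium, and the stacked $\phi^*(s,\mathbf x^*)$ is the corresponding optimal control vector.

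The step I expect to be the main obstacle is the mutual consistency of the $n$ individual optimizations: equation (\ref{action36.10}) for agent $i$ involves $f^i$, which through $g^i$ depends on the entire opinion profile $\mathbf x$, so the scalar equations for $i=1,\dots,n$ form a coupled system, and the stacked vector $\mathbf x^*$ is meaningful only once this system is shown to admit a simultaneous solution. I would handle this by exploiting the structure: each $f^i$ is quadratic in the opinions (inherited from the quadratic $g^i$) plus $h^i$-terms that are $C^2$ with derivatives bounded under Assumption \ref{as0}, so the system $\partial_{x^i}[\partial_s f^i-f^i]=0$, $i\in N$, is affine in $\mathbf x$ at leading order and can be solved simultaneously --- either by writing it in matrix form and inverting, or, since Assumption \ref{asf3}(i) confines opinions to the compact set $[0,1]^n$, by a Brouwer fixed-point argument on the joint best-response map. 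Once existence of $\mathbf x^*$ is secured, the remaining claims are immediate from the two propositions, so the corollary follows.
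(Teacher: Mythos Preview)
The paper gives no separate proof of this corollary; it is stated immediately after Proposition~\ref{p1} and treated as a direct definitional consequence --- the justification is essentially the clause ``As each player has an optimal opinion $x^{i*}$'' embedded in the statement itself, after which the paper moves straight to assembling the stacked SDE (\ref{sde}). In other words, the paper simply \emph{defines} $\mathbf x^*$ and $\phi^*$ by stacking the componentwise outputs of Propositions~\ref{p0} and~\ref{p1} and declares the result to be the feedback Nash equilibrium vector, without separately verifying the inequality in Definition~\ref{d1} or addressing the cross-agent coupling.

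Your proposal therefore does substantially more than the paper. The first paragraph --- invoke Proposition~\ref{p1} for each $i$ to get $x^{i*}$, then Proposition~\ref{p0} to get $\phi^{i*}$, then stack --- is exactly the paper's implicit reasoning and would already suffice to match it. Your second and third paragraphs (freezing $\phi^{j*}$ for $j\neq i$ to reduce to a single-agent problem, and handling the simultaneous solvability of the coupled system (\ref{action36.10}) via an affine/Brouwer argument on $[0,1]^n$) are genuine additions: they attempt to close gaps that the paper leaves open. These are reasonable and the fixed-point route is sound in outline, but be aware that the paper never makes this argument, so if your goal is to reproduce the paper's proof you can stop after the stacking step; if your goal is to supply the rigor the paper omits, your sketch is a legitimate strengthening rather than a different proof of the same content.
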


After combining the opinion state variables and the Lagrangian multipliers, the following equation is obtained
\[
\begin{bmatrix}
d\mathbf x(s)\\d\bm{\lambda(s)}
\end{bmatrix}=\hat{\mathbf K}
\begin{bmatrix}
\mathbf x_0\\\bm{\lambda}_0
\end{bmatrix}ds+\mathbf A
\begin{bmatrix}
\mathbf x(s)\\\bm{\lambda}(s)
\end{bmatrix}ds+
\begin{bmatrix}
\bm{\sigma}\\\bm 0
\end{bmatrix}
\begin{bmatrix}
d\mathbf B(s)\\ d\mathbf B_{\bm{\lambda}}(s)
\end{bmatrix}
\]
where
\[
\mathbf A=\begin{bmatrix}
\bm{\mu}\ \ -\mathbf I\\-\mathbf W\ \ \bm 0
\end{bmatrix}, \ \hat{\mathbf K}=
\begin{bmatrix}
\bm 0\ \ \bm 0\\\mathbf K\ \ \bm 0
\end{bmatrix}
\]
where $\mathbf I$ is the identity matrix of size $n$, $\bm{\lambda}(s)=[\lambda^1(s),\lambda^2(s),...,\lambda^n(s)]^T$, \\$\mathbf K=\text{diag}[k_1,k_2,...,k_n]$, $\bm{\mu}$ is an $n\times 1$ vector, $\bm{\sigma}$ is an $n\times m$-dimensional matrix $d\mathbf B(s)$ is an $m\times 1$-dimensional Brownian motion corresponding to opinion and $d\mathbf B_{\bm{\lambda}}(s)$ is an $m\times 1$ dimensional Brownian motion of the Lagrangian multiplier. Following \cite{niazi2016}
\[
\mathbf W=
\begin{bmatrix}
q_1& -w_{12} & \dots &-w_{1n}\\
-w_{21} & q_2 & \dots & -w_{2n}\\
\vdots &\vdots&\ddots &\vdots\\
-w_{n1} & -w_{n2} & \dots & q_n
\end{bmatrix}
\]
with $q_i=\sum_{j\in\eta_i}w_{ij}+k_i$. $\mathbf W$ is a Laplacian-like matrix of a weighted directed gaph $G$ \citep{niazi2016} where $ij^{th}$ element in the off-diagonal shows the weight of the edge directed from $i$ to $j$. Define $d\mathbf X(s)=[d\mathbf x(s), d\bm{\lambda}(s)]^T$, $\mathbf X_0=[\mathbf x_0,\bm{\lambda}_0]^T$, $\mathbf X(s)=[\mathbf x(s),\bm{\lambda}(s)]^T$, $\hat{\bm{\sigma}}=[\bm{\sigma},\bm 0]^T$ and $d\hat{\mathbf B}(s)=[d\mathbf B(s),d\mathbf B_{\bm{\lambda}}(s)]^T$. Then we get the following equation
\begin{align}\label{sde}
d\mathbf X(s)=\hat{\mathbf K}\mathbf X_0ds+\mathbf A\mathbf X(s)ds+\hat{\bm{\sigma}}d\hat{\mathbf B}(s).
\end{align}
Following \cite{oksendal2003} we get a unique solution of the stochastic differential equation expressed in Equation (\ref{sde}) as 
\begin{align}\label{sde0}
\mathbf X(s)=\exp(\mathbf A s)\left[\hat{\mathbf K}\mathbf X_0+\exp(-\mathbf As)\hat{\bm{\sigma}}\hat{\mathbf B}(s)+\int_0^t\exp(-\mathbf As)\mathbf A\hat{\bm{\sigma}}\hat{\mathbf B}(s)ds\right].
\end{align}

\section{Stochastic differential games with an explicit feedback Nash equilibrium}

Propositions \ref{p0} and \ref{p1} states that, for agent $i$ and given $h^i(s,x^i)$ one can get a optimal Nash feedback control $\phi^{i*}(s,x^i)$ and for a unique solution of the transition wave function the unique opinion dynamics is $x^{i*}$. In this section I am considering two main consensus: full consensus or complete information and consensus under a leader who can influence other agents' opinions.

First, consider the consensus under complete information. Let there be a network where all agents are connected with each other or $\eta_i=N\setminus\{i\}$. As every agent has equal power to influence others, in the long run a consensus will be eventually reached. As some agents are stubborn, their opinions might not be influenced by others and a full consensus is not reached. Following \cite{niazi2016} assume all the parameters of agent $i$'s cost function are equal or $k_i=k$, $w_{ij}=w_{ji}=w$ for all $i\in N$ and $(i,j)\in E$ where agent $i$'s stochastic opinion dynamics is represented by
\begin{align}\label{fc}
dx^i(s)=\left[\mbox{$\frac{1}{n}$}\sum_{j=1}^nx^{j*}+\gamma(s)\left(x^i(s)-\mbox{$\frac{1}{n}$}\sum_{j=1}^nx^{j*}\right)-u^i(s)\right]ds+\sqrt{2\sigma}dB^i(s),
\end{align}
where $\gamma(s)=\frac{k}{\lambda_1}+\left(\frac{nw}{\lambda_1}\right)\frac{cosh\left[\sqrt{\lambda_1}(t-s)\right]}{cosh\left(\sqrt{\lambda_1}t\right)}$, $\lambda_1=k+nw$ and $\sigma$ is a constant diffusion component. In Equation (\ref{fc}) $x^{j*}$ is the optimal opinion of $j^{th}$ agent according to agent $i$ because, under complete information agent $i$ has the information of all possible reaction functions of agent $j$ but does not know what reaction function agent $j$ will play. Therefore, agent $i$ assumes agent $j$ is rational and calculates optimal opinion $x^{j*}$. Opinion trajectory explained in Equation (\ref{fc}) has drift part and a diffusion part. The drift part has three components, the first component is the average of optimal opinions of all the agents in the network, the second term depends on the difference between the opinion of agent $i$ at time $s$ and the average and the third component is the control of agent $i$. As control is the cost of agent $i$ in the opinion dynamics, it comes with a negative sign at the front. I do not consider other agents' controls in Equation (\ref{fc}) because, I assume all of the agents' control in this network are independent to each other.

\begin{prop}\label{p2}
	Suppose agent $i$ minimizes the objective cost function 
	\[
	\int_0^t\E_0\  \bigg\{\mbox{$\frac{1}{2}$}nw\left[x^i(s)-x^j(s)\right]^2+\mbox{$\frac{1}{2}$}k\left[x^i(s)-x_0^i\right]^2+\mbox{$\frac{1}{2}$}\left[u^i(s)\right]^2\bigg |\mathcal F_0^x\bigg\} ds,
	\]
	subject to the stochastic opinion dynamics expressed in Equation (\ref{fc}). For $b,d>0$, define $h^i(s,x^i)=\exp(sbx^i+d)$.
	
	(i) Then for 
	\begin{align}
	f^i(s,\mathbf x,\lambda^i,u^i)&=\mbox{$\frac{1}{2}$}nw\left(x^i-x^j\right)^2+\mbox{$\frac{1}{2}$}k\left(x^i-x_0^i\right)^2+\mbox{$\frac{1}{2}$}\left(u^i\right)^2+b\lambda^ix^ih^i(s,x^i)+\mbox{$\frac{\partial \lambda^i}{\partial s}$}h^i(s,x^i)\notag\\&\hspace{.5cm}+sb\lambda^ih^i(s,x^i)\left[\mbox{$\frac{1}{n}$}\sum_{j=1}^nx^{j*}+\gamma\left(x^i-\mbox{$\frac{1}{n}$}\sum_{j=1}^nx^{j*}\right)-u^i\right]+s^2b^2\sigma\lambda^ih^i(s,x^i),\notag
	\end{align}
	a feedback Nash Equilibrium control of opinion dynamics
	\begin{align}\label{fc0}
	\phi^{i*}(s,x^i)=p+\left\{q+\left[q^2+(r-p^2)^3\right]^{\frac{1}{2}}\right\}^{\frac{1}{3}}+\left\{q-\left[q^2+(r-p^2)^3\right]^{\frac{1}{2}}\right\}^{\frac{1}{3}},
	\end{align}
	where
	\begin{align}
	p&=-\frac{B_2(s,\gamma,x^i,x^j,\lambda^i)}{3B_1(s,x^i,\lambda^i)},\notag\\q&=[p(s,\gamma,x^i,x^j,\lambda^i)]^3+\frac{B_2(s,\gamma,x^i,x^j,\lambda^i)B_3(s,\gamma,x^i,x^j,\lambda^i)-3B_1(s,x^i,\lambda^i)B_4(s,\gamma,x^i,x^j,\lambda^i)}{6[B_1(s,x^i,\lambda^i)]^2},\notag\\
	r &=\frac{B_3(s,\gamma,x^i,x^j,\lambda^i)}{3B_1(s,x^i,\lambda^i)},\notag
	\end{align}
	$B_1=(C_2)^2$, $B_2=-C_2(2A_2+C_1)$, $B_3=(A_2)^2-2A_2C_1C_2-(C_3)^2$, $B_4=A_1C_3-C_1(A_2)^2$, $C_1=sb\lambda^ih^i(s,x^i)$, $C_2=(sb)^3\lambda^ih^i(s,x^i)$, and $C_3=(sb)^2\lambda^ih^i(s,x^i)$.
	
	(ii) For a unique solution of the wave function $\Psi_{is}(x^i)$ as expressed in Proposition \ref{p1} and $\lambda^i$ is a $C^2$ function with respect to $s$, an optimal opinion ${x}^{i*}$ is obtained by solving following equation
	\begin{align}
	& h^i(s,x^i)\biggr\{2b\lambda^ix^i+sb^3\lambda^i(x^i)^2+sb\mbox{$\frac{\partial^2\lambda^i}{\partial s^2}$}+b(1+sbx^i)\mbox{$\frac{\partial \lambda^i}{\partial s}$}+\left[[(sb)^2+b(1+b+sb)]\mbox{$\frac{\partial\lambda^i}{\partial s}$}\right]\notag\\&\hspace{1cm}\left[\mbox{$\frac{1}{n}$}\sum_{j=1}^nx^{j*}+\gamma\left(x^i-\mbox{$\frac{1}{n}$}\sum_{j=1}^nx^{j*}\right)-u^i\right]+\gamma\left[sb\mbox{$\frac{\partial \lambda^i}{\partial s}$}+b\lambda^i(1+sx^i)\right]\notag\\&\hspace{2cm}+sb\lambda^i\left[1+sb\left(x^i-\mbox{$\frac{1}{n}$}\sum_{j=1}^nx^{j*}\right)\right]\mbox{$\frac{\partial\gamma}{\partial s}$}+\sigma s^2b^2\bigg[\lambda^i(3+sbx^i)+\mbox{$\frac{\partial\lambda^i}{\partial s}$}\bigg]\biggr\}\notag\\&=x^i(k+nw)-(nwx^j+kx_0^i)+bh^i(s,x^i)\biggr\{sb\lambda^ix^i(1+s\gamma)+\lambda^i+s\mbox{$\frac{\partial \lambda^i}{\partial s}$}\notag\\&\hspace{1cm}+s^2b\lambda^i\left((1-\gamma)\mbox{$\frac{1}{n}$}\sum_{j=1}^nx^{j*}-u^i\right)+s\lambda^i(\gamma+s^2b\sigma)\biggr\},
	\end{align}
	which is
	\begin{align}\label{op6.0}
	x^{i*}&=A_{11}+\left\{A_{12}+\left[(A_{12})^2+[A_{13}-(A_{11})^2]^3\right]^{\frac{1}{2}}\right\}^{\frac{1}{3}}+\left\{A_{12}-\left[(A_{12})^2+[A_{13}-(A_{11})^2]^3\right]^{\frac{1}{2}}\right\}^{\frac{1}{3}},
	\end{align} 
	where
	\begin{align}
	& A_{13}=\frac{A_9(s,\sigma,\gamma,\lambda^i,u^i,x^j)}{3A_7(s,\lambda^i)}\notag\\
	& A_{12}=(A_{11})^3+\frac{A_8(s,\sigma,\gamma,\lambda^i,u^i,x^j)A_9(s,\sigma,\gamma,\lambda^i,u^i,x^j)-3A_7(s,\lambda^i)A_{10}(s,\sigma,\gamma,\lambda^i,u^i,x^j)}{6[A_7(s,\lambda^i)]^2},\notag\\ 
	& A_{11}=-\ \frac{A_8(s,\sigma,\gamma,\lambda^i,u^i,x^j)}{3A_7(s,\lambda^i)},\notag\\
	& A_{10}(s,\sigma,\gamma,\lambda^i,u^i,x^j)=A_3(s,\sigma,\gamma,\lambda^i,u^i,x^j)+beA_5(s,\sigma,\gamma,\lambda^i,u^i,x^j),\notag\\
	& A_9(s,\sigma,\gamma,\lambda^i,u^i,x^j)=beA_6(s,\sigma,\gamma,\lambda^i,u^i,x^j)+sb^2A_5(s,\sigma,\gamma,\lambda^i,u^i,x^j)-(k+nw),\notag\\
	& A_8(s,\sigma,\gamma,\lambda^i,u^i,x^j)=sb^2[e\lambda^i+A_6(s,\sigma,\gamma,\lambda^i,u^i,x^j)],\notag\\
	& A_7(s,\lambda^i)=s^2b^4\lambda^i,\notag\\
	& A_6(s,\sigma,\gamma,\lambda^i,u^i,x^j)=[2+\gamma s+s^2b\mbox{$\frac{\partial}{\partial s}$}\gamma+\sigma(sb)^2-sb(1+s\gamma)]\lambda^i+[sb+\gamma(1+b+sb+s^2b)]\mbox{$\frac{\partial\lambda^i}{\partial s}$},\notag 
	\end{align}
	and
	\begin{align}
	&
	A_5(s,\sigma,\gamma,\lambda^i,u^i,x^j)\notag\\&=s\mbox{$\frac{\partial^2\lambda^i}{\partial s^2}$}+\mbox{$\frac{\partial\lambda^i}{\partial s}$}+\left[(1+b+sb+s^2b)\mbox{$\frac{\partial\lambda^i}{\partial s}$}\right]\left[(1-\gamma)\mbox{$\frac{1}{n}$}\sum_{j=1}^nx^{j*}-u^i\right]+\gamma\lambda^i\left(1+s\mbox{$\frac{\partial\lambda^i}{\partial s}$}\right)\notag\\&\hspace{.5cm}+s\lambda^i\left[1-sb\mbox{$\frac{1}{n}$}\sum_{j=1}^nx^{j*}\right]\mbox{$\frac{\partial\gamma}{\partial s}$}+\sigma s^2b^2\big(3\lambda^i+\mbox{$\frac{\partial\lambda^i}{\partial s}$}\big)-A_4(s,\sigma,\gamma,\lambda^i,u^i,x^j).\notag
	\end{align}
	
	(iii) The opinion difference between agents $i$ and $j$ at time $s\in[0,t]$ is
	\begin{align}
	|\Delta x^{ij}(s)|\leq|\Delta x_0^{ij}|+\left|\int_0^t\left[\gamma(s)\Delta x^{ij}(s)-\Delta u^{ij}(s)\right]ds\right|+\left|\sqrt{2\sigma}\right|\left|\int_0^t[dB^i(s)-dB^j(s)]\right|,\notag
	\end{align}
	where $\Delta x^{ij}(s)=x^i(s)-x^j(s)$, $\Delta x_0^{ij}=x_0^i-x_0^j$ and $\Delta u^{ij}(s)=u^i(s)-u^j(s)$.
\end{prop}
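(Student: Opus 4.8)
The plan is to treat the three parts of Proposition~\ref{p2} as direct specializations of the machinery already built in Propositions~\ref{p0} and~\ref{p1}, together with an elementary estimate for part~(iii). First I would set up the concrete form of $f^i$ that governs this example. Starting from the definition of $f^i$ in Proposition~\ref{p0}, I substitute the explicit drift $\mu^i(s,x^i,u^i)=\frac{1}{n}\sum_{j=1}^n x^{j*}+\gamma(s)\big(x^i-\frac{1}{n}\sum_{j=1}^n x^{j*}\big)-u^i$ and diffusion $\sigma^i=\sqrt{2\sigma}$ read off from Equation~(\ref{fc}), the reduced cost integrand $g^i=\frac12 nw(x^i-x^j)^2+\frac12 k(x^i-x_0^i)^2+\frac12(u^i)^2$, and the ansatz $h^i(s,x^i)=\exp(sbx^i+d)$. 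Computing $\partial h^i/\partial s=bx^i h^i$, $\partial h^i/\partial x^i=sb\,h^i$, $\partial^2 h^i/\partial(x^i)^2=(sb)^2 h^i$, and the time derivative of $\lambda^i(s)h^i$ by the product rule, and collecting terms, yields exactly the stated expression for $f^i(s,\mathbf x,\lambda^i,u^i)$; this is a bookkeeping step that I would carry out but not display in full.

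For part~(i), by Proposition~\ref{p0} the optimal feedback control $\phi^{i*}(s,x^i)$ is the solution of $f_u^i\,(f_{xx}^i)^2=2f_x^i f_{xu}^i$. With the $f^i$ just obtained, $f_u^i$ is a cubic polynomial in $u^i$ (the $\frac12(u^i)^2$ term contributes a linear piece, and the term $sb\lambda^i h^i(-u^i)$ multiplied through $f_{xx}^i$ and cross-terms produce the higher powers), while $f_x^i$ and $f_{xu}^i$ contribute the remaining coefficients. Writing the resulting equation as $B_1(u^i)^3+B_2(u^i)^2+B_3 u^i+B_4=0$ with the coefficients $B_1,\dots,B_4$ as defined in the statement (themselves built from $C_1,C_2,C_3$, which are the $u^i$-free pieces of $sb\lambda^i h^i$ and its $x$-derivatives), I then apply Cardano's formula for the depressed cubic: the substitution $u^i=\phi^{i*}=-B_2/(3B_1)+\zeta$ removes the quadratic term, and the standard radical solution gives the claimed closed form in terms of $p,q,r$. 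Part~(ii) is structurally identical: Proposition~\ref{p1} tells us that the optimal opinion solves $\partial_{x^i}\partial_s f^i=\partial_{x^i}f^i$ (equivalently Equation~(\ref{action36.10})), so I differentiate the explicit $f^i$ once in $s$ and once in $x^i$, and once in $x^i$ alone, set them equal, simplify using the $h^i$-derivative identities above, and obtain the long displayed equation; recognizing it as a cubic in $x^{i*}$ with coefficients packaged into $A_7,\dots,A_{13}$ (with $e=\exp(d)$ absorbed into the $h^i$ evaluations), Cardano's formula again delivers Equation~(\ref{op6.0}).

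For part~(iii), I would subtract the opinion dynamics~(\ref{fc}) for agent $j$ from that for agent $i$. Since the diffusion coefficient $\sqrt{2\sigma}$ is common and the averaged term $\frac{1}{n}\sum_j x^{j*}$ cancels in the difference, writing $\Delta x^{ij}(s)=x^i(s)-x^j(s)$ gives $d\,\Delta x^{ij}(s)=\big[\gamma(s)\Delta x^{ij}(s)-\Delta u^{ij}(s)\big]ds+\sqrt{2\sigma}\,(dB^i(s)-dB^j(s))$. Integrating from $0$ to $s$ (absorbed into the $[0,t]$ bound as written), using $\Delta x^{ij}(0)=\Delta x_0^{ij}$, and applying the triangle inequality term-by-term yields the stated bound. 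The main obstacle is not conceptual but the sheer algebraic density of parts~(i) and~(ii): one must verify that after collecting all the $h^i$-weighted terms the control equation and the opinion equation are genuinely cubic (degree exactly three, with the asserted coefficients) and that no cross term of higher degree survives — in particular that $f_{xxu}^i=0$, which is already noted in the proof of Proposition~\ref{p0} and is what keeps the control equation cubic rather than higher order. Once that degree count is confirmed, the Cardano step is mechanical.
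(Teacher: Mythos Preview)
Your plan for parts~(i) and~(iii) matches the paper's argument: for~(i) you specialize $f^i$ from Proposition~\ref{p0}, compute $f_x^i,f_{xx}^i,f_u^i,f_{xu}^i$, plug into $f_u^i(f_{xx}^i)^2=2f_x^i f_{xu}^i$, observe a cubic in $u^i$, and apply Cardano; for~(iii) you subtract the two copies of Equation~(\ref{fc}) and use the triangle inequality. (One clarification on~(i): $f_u^i=u^i-sb\lambda^i h^i$ is only linear in $u^i$; the cubic arises because $f_{xx}^i$ is affine in $u^i$ through the drift term, so $f_u^i(f_{xx}^i)^2$ has degree three.)

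There is, however, a real gap in your treatment of part~(ii). After equating $\partial_s\partial_{x^i}f^i$ with $\partial_{x^i}f^i$ you do \emph{not} get a polynomial equation in $x^i$: every term carries the factor $h^i(s,x^i)=\exp(sbx^i+d)$, so the resulting relation is transcendental in $x^i$, and Cardano's formula cannot be invoked directly. The paper's proof inserts an additional step you have not identified: it linearizes $h^i$ via the first-order Taylor expansion
\[
h^i(s,x^i)=\exp(sbx^i+d)\approx 1+d+sbx^i=:e+sbx^i,
\]
valid for small $x^i$, with $e=1+d$ (not $e=\exp(d)$ as you wrote). Only after replacing $h^i$ by the affine function $e+sbx^i$ does the matched equation collapse to the cubic $A_7(x^i)^3+A_8(x^i)^2+A_9x^i+A_{10}=0$ with the stated coefficients, and only then is the Cardano step legitimate. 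Without this linearization your argument for~(ii) does not go through, and your identification of $e$ shows you have not accounted for where that constant actually comes from.
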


\begin{proof}
	(i). Let $h^i(s,x^i)=\exp(sbx^i+d)$, for a finite $b>0$ and $d>0$ with $\frac{\partial}{\partial s}h^i(s,x^i)=bx^ih^i(s,x^i)$, $\frac{\partial}{\partial x^i}h^i(s,x^i)=sbh^i(s,x^i)$ and $\frac{\partial^2}{\partial (x^i)^2}h^i(s,x^i)=s^2b^2h^i(s,x^i)$. Hence, Proposition \ref{p0} implies,
	\begin{align}\label{fc1}
	f^i(s,\mathbf x,\lambda^i,u^i)&=\mbox{$\frac{1}{2}$}nw\left(x^i-x^j\right)^2+\mbox{$\frac{1}{2}$}k\left(x^i-x_0^i\right)^2+\mbox{$\frac{1}{2}$}\left(u^i\right)^2+b\lambda^ix^ih^i(s,x^i)+\mbox{$\frac{\partial}{\partial s}$}\lambda^ih^i(s,x^i)\notag\\&\hspace{.5cm}+sb\lambda^ih^i(s,x^i)\left[\mbox{$\frac{1}{n}$}\sum_{j=1}^nx^{j*}+\gamma\left(x^i-\mbox{$\frac{1}{n}$}\sum_{j=1}^nx^{j*}\right)-u^i\right]+s^2b^2\sigma\lambda^ih^i(s,x^i).
	\end{align}
	Now
	\begin{align}\label{fc2}
	\mbox{$\frac{\partial}{\partial x^i}$}f^i(s,\mathbf x,\lambda^i,u^i)&=nw(x^i-x^j)+k(x^i-x_0^i)+bh^i(s,x^i)\biggr\{s\mbox{$\frac{\partial\lambda^i}{\partial s}$}\notag\\&\hspace{.5cm}+\lambda^i\left[1+bsx^i+s^2b\left[\mbox{$\frac{1}{n}$}\sum_{j=1}^nx^{j*}+\gamma\left(x^i-\mbox{$\frac{1}{n}$}\sum_{j=1}^nx^{j*}\right)-u^i\right]+s\gamma+\sigma s^3b^2\right]\biggr\}\notag\\&=A_1(s,\gamma,x^i,x^j,\lambda^i)-s^2b^2\lambda^ih^i(s,x^i)u^i,\notag\\
	\mbox{$\frac{\partial^2}{\partial (x^i)^2}$}f^i(s,\mathbf x,\lambda^i,u^i)&=nw+k+sb^2h^i(s,x^i)\biggr\{s\mbox{$\frac{\partial\lambda^i}{\partial s}$}+\lambda^i\biggr[1+sbx^i+s\gamma+\sigma s^3b^2\notag\\&\hspace{.5cm}+s^2b\bigg[\mbox{$\frac{1}{n}$}\sum_{j=1}^nx^{j*}+\gamma\left(x^i-\mbox{$\frac{1}{n}$}\sum_{j=1}^nx^{j*}\right)-u^i\bigg]\biggr]\biggr\}+sb^2\lambda^i(1+s\gamma)h^i(s,x^i)\notag\\&=A_2(s,\gamma,x^i,x^j,\lambda^i)-s^3b^3\lambda^ih^i(s,x^i)u^i,\notag\\
	\mbox{$\frac{\partial}{\partial u^i}$}f^i(s,\mathbf x,\lambda^i,u^i)&=u^i-sb\lambda^ih^i(s,x^i),
	\end{align}
	and,
	\begin{align}\label{fc3}
	\mbox{$\frac{\partial^2}{\partial x^i\partial u^i}$}f^i(s,\mathbf x,\lambda^i,u^i)&=-s^2b^2\lambda^i h^i(s,x^i).
	\end{align}
	Therefore, Equation (\ref{w21}) implies
	\begin{align}
	&\left[u^i-sb\lambda^ih^i(s,x^i)\right]\left[A_2(s,\gamma,x^i,x^j,\lambda^i)-s^3b^3u^i\lambda^ih^i(s,x^i)\right]^2\notag\\&=2s^2b^2\lambda^ih^i(s,x^i)\left[s^2b^2u^i\lambda^ih^i(s,x^i)-A_1(s,\gamma,x^i,x^j,\lambda^i)\right],\notag
	\end{align}
	and we get a cubic polynomial with respect to control
	\begin{align}\label{fc4}
	B_1(s,x^i,\lambda^i) (u^i)^3+B_2(s,\gamma,x^i,x^j,\lambda^i)(u^i)^2+B_3(s,\gamma,x^i,x^j,\lambda^i)u^i+B_4(s,\gamma,x^i,x^j,\lambda^i)=0,
	\end{align}
	where $B_1=(C_2)^2$, $B_2=-C_2(2A_2+C_1)$, $B_3=(A_2)^2-2A_2C_1C_2-(C_3)^2$, $B_4=A_1C_3-C_1(A_2)^2$, $C_1(s,x^i,\lambda^i)=sb\lambda^ih^i(s,x^i)$, $C_2(s,x^i,\lambda^i)=(sb)^3\lambda^ih^i(s,x^i)$, and 
	$C_3(s,x^i,\lambda^i)=(sb)^2\lambda^ih^i(s,x^i)$.
	Therefore, Equation (\ref{fc4}) gives feedback Nash equilibrium of control
	\begin{align}
	\phi^{i*}=p+\left\{q+\left[q^2+(r-p^2)^3\right]^{\frac{1}{2}}\right\}^{\frac{1}{3}}+\left\{q-\left[q^2+(r-p^2)^3\right]^{\frac{1}{2}}\right\}^{\frac{1}{3}},
	\end{align}
	where
	\begin{align}
	& p=-\frac{B_2(s,\gamma,x^i,x^j,\lambda^i)}{3B_1(s,x^i,\lambda^i)},\notag\\& q=p^3+\frac{B_2(s,\gamma,x^i,x^j,\lambda^i)B_3(s,\gamma,x^i,x^j,\lambda^i)-3B_1(s,x^i,\lambda^i)B_4(s,\gamma,x^i,x^j,\lambda^i)}{6[B_1(s,x^i,\lambda^i)]^2},\notag
	\end{align}
	and
	\begin{align}
	r=\frac{B_3(s,\gamma,x^i,x^j,\lambda^i)}{3B_1(s,x^i,\lambda^i)}.\notag
	\end{align}
	
	(ii). In order to prove the second part let us use Proposition \ref{p1}. The right hand side of Equation (\ref{op0}) becomes,
	\begin{align}\label{op1}
	& \mbox{$\frac{\partial}{\partial x^i}$}f^i[s,\mathbf x(s),\lambda^i(s),u^i(s)]\notag\\&=x^i(k+nw)-(nwx^j+kx_0^i)+bh^i(s,x^i)\biggr\{sb\lambda^ix^i(1+s\gamma)+\lambda^i+s\mbox{$\frac{\partial\lambda^i}{\partial s}$}\notag\\&\hspace{1cm}+s^2b\lambda^i\left((1-\gamma)\mbox{$\frac{1}{n}$}\sum_{j=1}^nx^{j*}-u^i\right)+s\lambda^i(\gamma+s^2b\sigma)\biggr\}\notag\\&=x^i(k+nw)-A_3(w,k,x^j)+bh^i(s,x^i)\left[A_4(s,\sigma,\gamma,\lambda^i,u^i,x^j)+sb\lambda^ix^i(1+s\gamma)\right],
	\end{align}
	the left hand side implies
	\begin{align}
	&\mbox{$\frac{\partial}{\partial s}$}f^i[s,\mathbf x(s),\lambda^i(s),u^i(s)]\notag\\&=h^i(s,x^i)\biggr\{\lambda^i(bx^i)^2+\mbox{$\frac{\partial^2\lambda^i}{\partial s^2}$}+bx^i\mbox{$\frac{\partial \lambda^i}{\partial s}$}\notag\\&\hspace{.25cm}+b\left[s\mbox{$\frac{\partial\lambda^i}{\partial s}$}+\lambda^i(1+sx^i)\right]\left[\mbox{$\frac{1}{n}$}\sum_{j=1}x^{j*}+\gamma\left(x^i-\mbox{$\frac{1}{n}$}\sum_{j=1}x^{j*}\right)-u^i\right]\notag\\&\hspace{.5cm}+sb\lambda^i\left(x^i-\mbox{$\frac{1}{n}$}\sum_{j=1}x^{j*}\right)\mbox{$\frac{\partial\gamma}{\partial s}$}+sb^2\sigma\lambda^i(2b+sb^2x^i)+s^2b^2\sigma\mbox{$\frac{\partial\lambda^i}{\partial s}$}\biggr\},\notag
	\end{align}
	and
	\begin{align}\label{op2}
	& \mbox{$\frac{\partial^2}{\partial s \partial x^i}$}f^i[s,\mathbf x(s),\lambda^i(s),u^i(s)]\notag\\&=h^i(s,x^i)\biggr\{2b\lambda^ix^i+sb^3\lambda^i(x^i)^2+sb\mbox{$\frac{\partial^2\lambda^i}{\partial s^2}$}+b(1+sbx^i)\mbox{$\frac{\partial\lambda^i}{\partial s}$}+\left[[(sb)^2+b(1+b+sb)]\mbox{$\frac{\partial\lambda^i}{\partial s}$}\right]\times\notag\\&\hspace{1cm}\left[\mbox{$\frac{1}{n}$}\sum_{j=1}^nx^{j*}+\gamma\left(x^i-\mbox{$\frac{1}{n}$}\sum_{j=1}^nx^{j*}\right)-u^i\right]+\gamma\left[sb\mbox{$\frac{\partial\lambda^i}{\partial s}$}+b\lambda^i(1+sx^i)\right]\notag\\&\hspace{2cm}+sb\lambda^i\left[1+sb\left(x^i-\mbox{$\frac{1}{n}$}\sum_{j=1}^nx^{j*}\right)\right]\mbox{$\frac{\partial\gamma}{\partial s}$}+\sigma s^2b^3\bigg[\lambda^i(3+sbx^i)+\mbox{$\frac{\partial\lambda^i}{\partial s}$}\bigg]\biggr\}.
	\end{align}
	Matching Equations (\ref{op1}) and (\ref{op2}) we get,
	\begin{align}
	& h^i(s,x^i)\biggr\{2b\lambda^ix^i+sb^3\lambda^i(x^i)^2+sb\mbox{$\frac{\partial^2\lambda^i}{\partial s^2}$}+b(1+sbx^i)\mbox{$\frac{\partial\lambda^i}{\partial s}$}+\left[[(sb)^2+b(1+b+sb)]\mbox{$\frac{\partial\lambda^i}{\partial s}$}\right]\notag\\&\hspace{.25cm}\left[\mbox{$\frac{1}{n}$}\sum_{j=1}^nx^{j*}+\gamma\left(x^i-\mbox{$\frac{1}{n}$}\sum_{j=1}^nx^{j*}\right)-u^i\right]+\gamma\left[sb\mbox{$\frac{\partial\lambda^i}{\partial s}$}+b\lambda^i(1+sx^i)\right]\notag\\&\hspace{.5cm}+sb\lambda^i\left[1+sb\left(x^i-\mbox{$\frac{1}{n}$}\sum_{j=1}^nx^{j*}\right)\right]\mbox{$\frac{\partial\gamma}{\partial s}$}+\sigma s^2b^3\bigg[\lambda^i(3+sbx^i)+\mbox{$\frac{\partial\lambda^i}{\partial s}$}\bigg]\biggr\}\notag\\&=x^i(k+nw)-(nwx^j+kx_0^i)+bh^i(s,x^i)\biggr\{sb\lambda^ix^i(1+s\gamma)+\lambda^i+s\mbox{$\frac{\partial\lambda^i}{\partial s}$}\notag\\&\hspace{1cm}+s^2b\lambda^i\left((1-\gamma)\mbox{$\frac{1}{n}$}\sum_{j=1}^nx^{j*}-u^i\right)+s\lambda^i(\gamma+s^2b\sigma)\biggr\},\notag
	\end{align}
	or,
	\begin{align}\label{op3}
	& bh^i(s,x^i)\biggr\{2\lambda^ix^i+sb^2\lambda^i(x^i)^2+s\mbox{$\frac{\partial^2\lambda^i}{\partial s^2}$}+(1+sbx^i)\mbox{$\frac{\partial\lambda^i}{\partial s}$}+\left[(1+s^2b+b+sb)\mbox{$\frac{\partial\lambda^i}{\partial s}$}\right]\times\notag\\&\hspace{.25cm}\left[\mbox{$\frac{1}{n}$}\sum_{j=1}^nx^{j*}+\gamma\left(x^i-\mbox{$\frac{1}{n}$}\sum_{j=1}^nx^{j*}\right)-u^i\right]+\gamma\left[s\mbox{$\frac{\partial\lambda^i}{\partial s}$}+\lambda^i(1+sx^i)\right]\notag\\&\hspace{.5cm}+s\lambda^i\left[1+sb\left(x^i-\mbox{$\frac{1}{n}$}\sum_{j=1}^nx^{j*}\right)\right]\mbox{$\frac{\partial\gamma}{\partial s}$}+\sigma s^2b^2\bigg[\lambda^i(3+sbx^i)+\mbox{$\frac{\partial\lambda^i}{\partial s}$}\bigg]\notag\\&\hspace{1cm}-\bigg[sb\lambda^ix^i(1+s\gamma)+\lambda^i+s\mbox{$\frac{\partial}{\partial s}$}\lambda^i+s^2b\lambda^i\left((1-\gamma)\mbox{$\frac{1}{n}$}\sum_{j=1}^nx^{j*}-u^i\right)+s\lambda^i(\gamma+s^2b\sigma)\bigg]\biggr\}\notag\\&=x^i(k+nw)-A_3(w,k,x^j).
	\end{align}
	As $h^i(s,x^i)=\exp(sbx^i+d)$, for $b>0$, $d>0$ fixed and a very small value of $x^i$ it can be approximated as $h^i(s,x^i)=1+(sbx^i+d)+o([sbx^i+d]^2)\approx1+d+sbx^i=e+sbx^i$ where assume $e=1+d$. 
	
	Therefore,
	\begin{align}\label{op4}
	& (be+sb^2x^i)\biggr\{2\lambda^ix^i+sb^2\lambda^i(x^i)^2+s\mbox{$\frac{\partial^2\lambda^i}{\partial s^2}$}+(1+sbx^i)\mbox{$\frac{\partial\lambda^i}{\partial s}$}+\left[(1+s^2b+b+sb)\mbox{$\frac{\partial\lambda^i}{\partial s}$}\right]\times\notag\\&\hspace{.25cm}\left[\mbox{$\frac{1}{n}$}\sum_{j=1}^nx^{j*}+\gamma\left(x^i-\mbox{$\frac{1}{n}$}\sum_{j=1}^nx^{j*}\right)-u^i\right]+\gamma\left[s\mbox{$\frac{\partial\lambda^i}{\partial s}$}+\lambda^i(1+sx^i)\right]\notag\\&\hspace{.5cm}+s\lambda^i\left[1+sb\left(x^i-\mbox{$\frac{1}{n}$}\sum_{j=1}^nx^{j*}\right)\right]\mbox{$\frac{\partial\gamma}{\partial s}$}+\sigma s^2b^2\bigg[\lambda^i(3+sbx^i)+\mbox{$\frac{\partial\lambda^i}{\partial s}$}\bigg]\notag\\&\hspace{1cm}-\bigg[sb\lambda^ix^i(1+s\gamma)+\lambda^i+s\mbox{$\frac{\partial\lambda^i}{\partial s}$}+s^2b\lambda^i\left((1-\gamma)\mbox{$\frac{1}{n}$}\sum_{j=1}^nx^{j*}-u^i\right)+s\lambda^i(\gamma+s^2b\sigma)\bigg]\biggr\}\notag\\&=x^i(k+nw)-A_3(w,k,x^j).
	\end{align}
	After rearranging terms of Equation (\ref{op4}) we get a cubic polynomial opinion of agent $i$
	\begin{align}\label{op5}
	A_7(s,\lambda^i) (x^i)^3+A_8(s,\sigma,\gamma,\lambda^i,u^i,x^j) (x^i)^2+A_9(s,\sigma,\gamma,\lambda^i,u^i,x^j) x^i+A_{10}(s,\sigma,\gamma,\lambda^i,u^i,x^j)=0,
	\end{align}
	where
	\begin{align}
	& A_{10}(s,\sigma,\gamma,\lambda^i,u^i,x^j)=A_3(s,\sigma,\gamma,\lambda^i,u^i,x^j)+beA_5(s,\sigma,\gamma,\lambda^i,u^i,x^j),\notag\\
	& A_9(s,\sigma,\gamma,\lambda^i,u^i,x^j)=beA_6(s,\sigma,\gamma,\lambda^i,u^i,x^j)+sb^2A_5(s,\sigma,\gamma,\lambda^i,u^i,x^j)-(k+nw),\notag\\
	& A_8(s,\sigma,\gamma,\lambda^i,u^i,x^j)=sb^2[e\lambda^i+A_6(s,\sigma,\gamma,\lambda^i,u^i,x^j)],\notag\\
	& A_7(s,\lambda^i)=s^2b^4\lambda^i,\notag\\
	& A_6(s,\sigma,\gamma,\lambda^i,u^i,x^j)=[2+\gamma s+s^2b\mbox{$\frac{\partial\gamma}{\partial s}$}+\sigma(sb)^2-sb(1+s\gamma)]\lambda^i+[sb+\gamma(1+b+sb+s^2b)]\mbox{$\frac{\partial\lambda^i}{\partial s}$},\notag
	\end{align}
	and
	\begin{align}
	& A_5(s,\sigma,\gamma,\lambda^i,u^i,x^j)\notag\\&=s\mbox{$\frac{\partial^2\lambda^i}{\partial s^2}$}+\mbox{$\frac{\partial\lambda^i}{\partial s}$}+\left[(1+b+sb+s^2b)\mbox{$\frac{\partial\lambda^i}{\partial s}$}\right]\left[(1-\gamma)\mbox{$\frac{1}{n}$}\sum_{j=1}^nx^{j*}-u^i\right]+\gamma\lambda^i\left(1+s\mbox{$\frac{\partial\lambda^i}{\partial s}$}\right)\notag\\&\hspace{.5cm}+s\lambda^i\left[1-sb\mbox{$\frac{1}{n}$}\sum_{j=1}^nx^{j*}\right]\mbox{$\frac{\partial\gamma}{\partial s}$}+\sigma s^2b^2\big(3\lambda^i+\mbox{$\frac{\partial\lambda^i}{\partial s}$}\big)-A_4(s,\sigma,\gamma,\lambda^i,u^i,x^j).\notag
	\end{align}
	After solving Equation (\ref{op5}) we get a set of optimal opinions for agent $i$
	\begin{align}\label{op6}
	x^{i*}&=A_{11}+\left\{A_{12}+\left[(A_{12})^2+[A_{13}-(A_{11})^2]^3\right]^{\frac{1}{2}}\right\}^{\frac{1}{3}}+\left\{A_{12}-\left[(A_{12})^2+[A_{13}-(A_{11})^2]^3\right]^{\frac{1}{2}}\right\}^{\frac{1}{3}},
	\end{align}
	where 
	\begin{align}
	& A_{11}=-\ \frac{A_8(s,\sigma,\gamma,\lambda^i,u^i,x^j)}{3A_7(s,\lambda^i)},\notag\\& A_{12}=(A_{11})^3+\frac{A_8(s,\sigma,\gamma,\lambda^i,u^i,x^j)A_9(s,\sigma,\gamma,\lambda^i,u^i,x^j)-3A_7(s,\lambda^i)A_{10}(s,\sigma,\gamma,\lambda^i,u^i,x^j)}{6[A_7(s,\lambda^i)]^2},\notag
	\end{align}
	and
	\begin{align}
	A_{13}=\frac{A_9(s,\sigma,\gamma,\lambda^i,u^i,x^j)}{3A_7(s,\lambda^i)}.\notag
	\end{align}
	
	(iii). The integral forms of opinions of agents $i$ and $j$ obtained from the Equation (\ref{fc}) are
	\begin{align}
	x^i(s)=x_0^i+\int_0^t\left[\mbox{$\frac{1}{n}$}\sum_{j=1}^nx^{j*}+\gamma(s)\left(x^i(s)-\mbox{$\frac{1}{n}$}\sum_{j=1}^nx^{j*}\right)-u^i(s)\right]ds+\sqrt{2\sigma}\int_0^tdB^i(s),\notag
	\end{align}
	and
	\begin{align}
	x^j(s)=x_0^j+\int_0^t\left[\mbox{$\frac{1}{n}$}\sum_{i=1}^nx^{i*}+\gamma(s)\left(x^j(s)-\mbox{$\frac{1}{n}$}\sum_{i=1}^nx^{i*}\right)-u^j(s)\right]ds+\sqrt{2\sigma}\int_0^tdB^j(s),\notag
	\end{align}
	where $x_0^i$ and $x_0^j$ are the initial opinions of agents $i$ and $j$. As agents $i$ and $j$ comes from the same population hence, $\frac{1}{n}\sum_{i=1}^nx^{i*}=\frac{1}{n}\sum_{j=1}^nx^{j*}$. Subtracting $x^j(s)$ from $x^i(s)$ gives,
	\begin{align}
	& x^i(s)-x^j(s)\notag\\&=(x_0^i-x_0^j)+\int_0^t\left[\gamma(s)[x^i(s)-x^j(s)]-[u^i(s)-u^j(s)\right]ds+\sqrt{2\sigma}\int_0^t\left[dB^i(s)-dB^j(s)\right]\notag
	\end{align}
	and taking absolute value on both sides and using triangle inequality we get,
	\begin{align}
	|\Delta x^{ij}(s)|\leq|\Delta x_0^{ij}|+\left|\int_0^t\left[\gamma(s)\Delta x^{ij}(s)-\Delta u^{ij}(s)\right]ds\right|+\left|\sqrt{2\sigma}\right|\left|\int_0^t[dB^i(s)-dB^j(s)]\right|,\notag
	\end{align}
	where $\Delta x^{ij}(s)=x^i(s)-x^j(s)$, $\Delta x_0^{ij}=x_0^i-x_0^j$ and $\Delta u^{ij}(s)=u^i(s)-u^j(s)$.  	
\end{proof}	

Consider the consensus with a leader (agent $1$) under complete information. It might be a network where agent $1$, the political analyst who can influence the decision of the rest of the agents but not the other way. Furthermore, I also assume that, before a game starts the leader makes their optimal opinion based on the history of the network and their perspective of opinion performance of other agents. Once agent $1$ optimizes their opinion at the beginning of the game, they never change their mind and influences in other agents' decisions. Therefore, leader's cost functional is defined as,
\begin{align}\label{l0}
L^1(s,\mathbf x,x_0^1,u^1)&=\int_0^t \mbox{$\frac{1}{2}$} \bigg(n\bar w\left[x^1(s)-\tilde{x}^j(s)\right]^2+k_1\left[x^1(s)-x_0^1\right]^2+\left[u^1(s)\right]^2\bigg)\ ds,
\end{align}
where $\bar w\in[0,\infty)$ is a parameter assigned by agent $1$ to weight the susceptibility of agent $j$ to influence them before the game starts, $k_1$ is a finite positive constant which measures the stubbornness of the leader, $u^1$ is the opinion control and $\tilde x^j<x^{j*}$ be the fixed opinion values of the other agents according to agent $1$. The reason behind the assumption $\tilde x^j<x^{j*}$ is that, the leader is a rational person and they want to get more return out of this network than any other agent and assigns an opinion $\tilde x^j$ which is less than agent $j$'s optimal opinion before a game starts. Opinion dynamics of the leader (agent $1$) is 
\begin{align}\label{l1}
dx^1(s)=\left[\mbox{$\frac{1}{n-1}$}\sum_{j=2}^n\tilde{x}^{j}+\hat{\gamma}(s)\left(x^1(s)-\mbox{$\frac{1}{n-1}$}\sum_{j=2}^n\tilde{x}^{j}\right)-u^1(s)\right]ds+\sqrt{2\sigma^1}dB^1(s),
\end{align}
where $\hat{\gamma}(s)=\frac{k_1}{\hat {\lambda}_1}+\left(\frac{n\bar w}{\hat\lambda_1}\right)\frac{cosh\left[\sqrt{\hat\lambda_1}(t-s)\right]}{cosh\left(\sqrt{\hat\lambda_1}t\right)}$, $\hat\lambda_1=k_1+n\bar w$ and $\sigma^1$ is a constant diffusion component of the leader. Therefore a leader's problem is to minimize the expected cost functional $\E(L^1)$ with respect to their control $u^1$ and opinion $x^1$  subject to the Equation (\ref{l1}). Proposition \ref{p2} implies,

\begin{cor}\label{c1}
	Suppose the leader (agent $1$) has the objective cost function 
	\[
	\E\  \bigg\{\mbox{$\frac{1}{2}$}n\bar w\left[x^1(s)-\tilde{x}^j(s)\right]^2+\mbox{$\frac{1}{2}$}k_1\left[x^1(s)-x_0^1\right]^2+\mbox{$\frac{1}{2}$}\left[u^1(s)\right]^2\bigg\}
	\]
	subject to the stochastic opinion dynamics expressed in Equation (\ref{l1}). For $b,d>0$, define $h^1(s,x^1)=\exp(sbx^1+d)$.
	
	(i) Then for 
	\begin{align}
	&f^1(s,\mathbf x,\lambda^1,u^1)=\mbox{$\frac{1}{2}$}n\bar w\left(x^1-\tilde{x}^j\right)^2+\mbox{$\frac{1}{2}$}k_1\left(x^1-x_0^1\right)^2+\mbox{$\frac{1}{2}$}\left(u^1\right)^2+b\lambda^1x^1h^1(s,x^1)+\mbox{$\frac{\partial \lambda^1}{\partial s}$}h^1(s,x^1)\notag\\&\hspace{.5cm}+sb\lambda^1h^1(s,x^1)\left[\mbox{$\frac{1}{n-1}$}\sum_{j=2}^n\tilde{x}^{j}+\hat\gamma\left(x^1-\mbox{$\frac{1}{n-1}$}\sum_{j=2}^n\tilde{x}^{j}\right)-u^1\right]+s^2b^2\sigma^1\lambda^1h^1(s,x^1),\notag
	\end{align}
	an optimal control of the leader
	\begin{align}\label{l2}
	\hat{\phi}^{1*}(s,x^1)=\hat p+\left\{\hat q+\left[{\hat q}^2+(\hat r-{\hat p}^2)^3\right]^{\frac{1}{2}}\right\}^{\frac{1}{3}}+\left\{\hat q-\left[{\hat q}^2+(\hat r-{\hat p}^2)^3\right]^{\frac{1}{2}}\right\}^{\frac{1}{3}},
	\end{align}
	where
	\begin{align}
	\hat p&=-\frac{\hat{B}_2(s,\hat \gamma,x^1,\tilde{x}^j,\lambda^1)}{3\hat{B}_1(s,x^1,\lambda^1)},\notag\\\hat q&=(\hat p)^3+\frac{\hat{B}_2(s,\hat\gamma,x^1,\tilde{x}^j,\lambda^1)\hat{B}_3(s,\hat\gamma,x^1,\tilde{x}^j,\lambda^1)-3\hat{B}_1(s,x^1,\lambda^1)\hat{B}_4(s,\hat\gamma,x^1,\tilde{x}^j,\lambda^1)}{6[\hat{B}_1(s,x^1,\lambda^1)]^2},\notag\\
	\hat r &=\frac{\hat{B}_3(s,\hat\gamma,x^1,\tilde{x}^j,\lambda^1)}{3\hat{B}_1(s,x^1,\hat\lambda^1)},\notag
	\end{align}
	$\hat{B}_1=(\hat{C}_2)^2$, $\hat{B}_2=-\hat{C}_2(2\hat{A}_2+\hat{C}_1)$, $\hat{B}_3=(\hat{A}_2)^2-2\hat{A}_2\hat{C}_1\hat{C}_2-(\hat{C}_3)^2$, $\hat{B}_4=\hat{A}_1\hat{C}_3-\hat{C}_1(\hat{A}_2)^2$, $\hat{C}_1=sb\lambda^1h^1(s,x^1)$, $\hat{C}_2=(sb)^3\lambda^1h^1(s,x^1)$, and $\hat{C}_3=(sb)^2\lambda^1h^1(s,x^1)$.
	
	(ii) For a unique solution of the leader's  wave function $\Psi_{1s}(x)$ and $\lambda^1$ is a $C^2$ function with respect to $s$, a leader's optimal opinion ${x}^{1*}$ is obtained by solving following equation
	\begin{align}
	& h^1(s,x^1)\biggr\{2b\lambda^1x^1+sb^3\lambda^1(x^1)^2+sb\mbox{$\frac{\partial^2\lambda^1}{\partial s^2}$}+b(1+sbx^1)\mbox{$\frac{\partial \lambda^1}{\partial s}$}+\left[[(sb)^2+b(1+b+sb)]\mbox{$\frac{\partial \lambda^1}{\partial s}$}\right]\notag\\&\hspace{1cm}\left[\mbox{$\frac{1}{n-1}$}\sum_{j=2}^n\tilde{x}^{j}+\hat\gamma\left(x^1-\mbox{$\frac{1}{n-1}$}\sum_{j=2}^n\tilde{x}^{j}\right)-u^1\right]+\hat\gamma\left[sb\mbox{$\frac{\partial \lambda^1}{\partial s}$}+b\lambda^1(1+sx^1)\right]\notag\\&\hspace{2cm}+sb\lambda^1\left[1+sb\left(x^1-\mbox{$\frac{1}{n-1}$}\sum_{j=2}^n\tilde{x}^{j}\right)\right]\mbox{$\frac{\partial\hat{\gamma}}{\partial s}$}+\sigma^1 s^2b^3\bigg[\lambda^1(3+sbx^1)+\mbox{$\frac{\partial\lambda^1}{\partial s}$}\bigg]\biggr\}\notag\\&=x^1(k_1+n\bar w)-(n\bar w\tilde{x}^j+k_1x_0^1)+bh^1(s,x^1)\biggr\{sb\lambda^1x^1(1+s\hat\gamma)+\lambda^1+s\mbox{$\frac{\partial \lambda^1}{\partial s}$}\notag\\&\hspace{1cm}+s^2b\lambda^1\left((1-\hat\gamma)\mbox{$\frac{1}{n-1}$}\sum_{j=2}^n\tilde{x}^{j}-u^1\right)+s\lambda^1(\hat\gamma+s^2b\sigma^1)\biggr\},\notag
	\end{align}
	which is
	\begin{align}\label{l3}
	x^{1*}&=\hat{A}_{11}+\left\{\hat{A}_{12}+\left[(\hat{A}_{12})^2+[\hat{A}_{13}-(\hat{A}_{11})^2]^3\right]^{\frac{1}{2}}\right\}^{\frac{1}{3}}+\left\{\hat{A}_{12}-\left[(\hat{A}_{12})^2+[\hat{A}_{13}-(\hat{A}_{11})^2]^3\right]^{\frac{1}{2}}\right\}^{\frac{1}{3}},
	\end{align} 
	where
	\begin{align}
	& \hat{A}_{13}=\frac{\hat{A}_9(s,\sigma^1,\hat\gamma,\lambda^1,u^1,\tilde{x}^j)}{3\hat{A}_7(s,\lambda^1)}\notag\\
	& \hat{A}_{12}=(\hat{A}_{11})^3+\frac{\hat{A}_8(s,\sigma^1,\hat\gamma,\lambda^1,u^1,\tilde{x}^j)\hat{A}_9(s,\sigma^1,\hat\gamma,\lambda^1,u^1,\tilde{x}^j)-3\hat{A}_7(s,\lambda^1)\hat{A}_{10}(s,\sigma^1,\hat\gamma,\lambda^1,u^1,\tilde{x}^j)}{6[\hat{A}_7(s,\lambda^1)]^2},\notag\\ 
	& \hat{A}_{11}=-\ \frac{\hat{A}_8(s,\sigma^1,\hat\gamma,\lambda^1,u^1,\tilde{x}^j)}{3\hat{A}_7(s,\lambda^1)},\notag\\
	& \hat{A}_{10}(s,\sigma^1,\hat\gamma,\lambda^1,u^1,\tilde{x}^j)=\hat{A}_3(s,\sigma^1,\hat\gamma,\lambda^1,u^1,\tilde{x}^j)+be\hat{A}_5(s,\sigma^1,\hat\gamma,\lambda^1,u^1,\tilde{x}^j),\notag\\
	& \hat{A}_9(s,\sigma^1,\hat\gamma,\lambda^1,u^1,\tilde{x}^j)=be\hat{A}_6(s,\sigma^1,\hat\gamma,\lambda^1,u^1,\tilde{x}^j)+sb^2\hat{A}_5(s,\sigma^1,\hat\gamma,\lambda^1,u^1,\tilde{x}^j)-(k_1+n\bar w),\notag\\
	& \hat{A}_8(s,\sigma^1,\hat\gamma,\lambda^1,u^1,\tilde{x}^j)=sb^2[e\lambda^1+\hat{A}_6(s,\sigma^1,\hat\gamma,\lambda^1,u^1,\tilde{x}^j)],\notag\\
	& \hat{A}_7(s,\lambda^1)=s^2b^4\lambda^1,\notag\\
	& \hat{A}_6(s,\sigma^1,\gamma^1,\lambda^1,u^1,\tilde{x}^j)=[2+\hat\gamma s+s^2b\mbox{$\frac{\partial\hat{\gamma}}{\partial s}$}+\sigma^1(sb)^2-sb(1+s\hat\gamma)]\lambda^1+[sb+\hat\gamma(1+b+sb+s^2b)]\mbox{$\frac{\partial\lambda^1}{\partial s}$},\notag 
	\end{align}
	and
	\begin{align}
	&
	\hat{A}_5(s,\sigma^1,\hat\gamma,\lambda^1,u^1,\tilde{x}^j)\notag\\&=s\mbox{$\frac{\partial^2\lambda^1}{\partial s^2}$}+\mbox{$\frac{\partial\lambda^1}{\partial s}$}+\left[(1+b+sb+s^2b)\mbox{$\frac{\partial\lambda^1}{\partial s}$}\right]\left[(1-\hat\gamma)\mbox{$\frac{1}{n-1}$}\sum_{j=2}^n\tilde{x}^{j}-u^1\right]+\hat\gamma\lambda^1\left(1+s\mbox{$\frac{\partial\lambda^1}{\partial s}$}\right)\notag\\&\hspace{.5cm}+s\lambda^1\left[1-sb\mbox{$\frac{1}{n-1}$}\sum_{j=2}^n\tilde{x}^{j}\right]\mbox{$\frac{\partial\hat{\gamma}}{\partial s}$}+\sigma^1(sb)^2\big(3\lambda^1+\mbox{$\frac{\partial\lambda^1}{\partial s}$}\big)-\hat{A}_4(s,\sigma^1,\hat\gamma,\lambda^1,u^1,\tilde{x}^j).\notag
	\end{align}
\end{cor}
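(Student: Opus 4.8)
The plan is to recognize that the leader's optimization problem is structurally identical to the representative agent's problem of Proposition \ref{p2}, so the proof proceeds by applying Propositions \ref{p0} and \ref{p1} after relabeling parameters. First I would fix the dictionary $w \mapsto \bar w$, $k_i \mapsto k_1$, $x^j \mapsto \tilde x^j$, $\gamma \mapsto \hat\gamma$, $\lambda^i \mapsto \lambda^1$, $\sigma \mapsto \sigma^1$, and $\frac{1}{n}\sum_{j=1}^n x^{j*} \mapsto \frac{1}{n-1}\sum_{j=2}^n \tilde x^j$, and verify that under this dictionary the leader's cost functional (\ref{l0}) and opinion dynamics (\ref{l1}) coincide with the hypotheses of Proposition \ref{p2}. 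The one conceptual remark, which in fact simplifies matters, is that the $\tilde x^j$ are \emph{fixed} scalars the leader commits to before play, with $\tilde x^j < x^{j*}$, so there is no fixed-point coupling between the leader's optimal opinion and the followers' optimal opinions: $\frac{1}{n-1}\sum_{j=2}^n \tilde x^j$ enters merely as a known constant.

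For part (i), I would apply Proposition \ref{p0} with $h^1(s,x^1)=\exp(sbx^1+d)$, using $\partial_s h^1 = bx^1 h^1$, $\partial_{x^1} h^1 = sb\,h^1$, and $\partial^2_{x^1} h^1 = s^2 b^2 h^1$, and assemble $f^1$ as displayed in the statement. Computing $f_x^1$, $f_{xx}^1$, $f_u^1$, and $f_{xu}^1$ in parallel with (\ref{fc2}) and (\ref{fc3}) and substituting into the optimality relation $f_u^1 (f_{xx}^1)^2 = 2 f_x^1 f_{xu}^1$ of (\ref{w21}) produces a cubic $\hat B_1 (u^1)^3 + \hat B_2 (u^1)^2 + \hat B_3 u^1 + \hat B_4 = 0$ whose coefficients are the hatted analogues of the $B_k$, $C_k$ of Proposition \ref{p2}; since $\hat B_1 = (\hat C_2)^2 = ((sb)^3 \lambda^1 h^1)^2 \neq 0$ for $s,b,\lambda^1>0$, Cardano's formula gives $\hat\phi^{1*}$ with the stated $\hat p, \hat q, \hat r$.

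For part (ii), I would invoke Proposition \ref{p1}: the leader's Wick-rotated Schr\"odinger-type equation has the unique solution $\Psi_{1s}(x^1) = I^1(x^1)\exp\{s v^1\}$ with $v^1 = (b^1)^2/[4(a^1)^2] - f^1$, where $a^1 = \frac{1}{2} f_{xx}^1$ and $b^1 = f_x^1$ as in Proposition \ref{p1}, and feeding this back into the equation forces $\partial_s f^1 = \partial_{x^1} f^1$, hence, on differentiating in $x^1$, the analogue of (\ref{op0}). Computing $\partial_{x^1} f^1$, $\partial_s f^1$ and $\partial^2_{sx^1} f^1$ explicitly, applying the small-$x^1$ linearization $h^1(s,x^1) \approx e + sbx^1$ with $e = 1+d$ exactly as in the proof of Proposition \ref{p2}, and matching the two sides yields, after collecting powers of $x^1$, the cubic $\hat A_7 (x^1)^3 + \hat A_8 (x^1)^2 + \hat A_9 x^1 + \hat A_{10} = 0$ with precisely the hatted coefficients listed; Cardano's formula then gives $x^{1*}$ in closed form.

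The main obstacle is bookkeeping rather than mathematics: the derivatives of $f^1$ carry $\hat\gamma$, $\partial_s\hat\gamma$, $\lambda^1$, $\partial_s\lambda^1$, $\partial^2_s\lambda^1$ and numerous cross terms, so one must be careful when collecting the coefficients of $(x^1)^3$, $(x^1)^2$, $x^1$ and the constant to confirm they reproduce $\hat A_7$ through $\hat A_{10}$, and likewise for $\hat B_1$ through $\hat B_4$. Because every algebraic manipulation mirrors its counterpart in the proof of Proposition \ref{p2} under the parameter dictionary above, no new estimates or approximations beyond those already used there are required.
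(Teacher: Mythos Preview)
Your proposal is correct and matches the paper's approach exactly: the paper introduces Corollary~\ref{c1} with the single phrase ``Proposition~\ref{p2} implies,'' providing no separate proof, so the intended argument is precisely the parameter-relabeling you describe. Your observation that the $\tilde x^j$ are fixed constants (hence no fixed-point coupling arises) is a helpful clarification the paper leaves implicit.
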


As in Corollary \ref{c1} optimal opinion of agent $1$ is a solution of a cubic equation $x^{1*}$ takes three values and because of rationality he chooses that $x^{1*}$ which has the maximum value. If $x^{1*}=\{x_1^{1*},x_2^{1*},x_3^{1*}\}$ then optimal opinion of the leader is $\bar x^{1*}=\max \{x_1^{1*},x_2^{1*},x_3^{1*}\}$. Under complete information all the other agents has the information about $\bar x^{1*}$ before a game starts and adjusts their opinions on it. The network is represented by a direct graph with edges directed from all the agents towards the leader. Thus $\eta_1=\emptyset$, $\eta_i=\{1\}, \forall i\in N\setminus \{1\}$ \citep{niazi2016}. Each of other agents represented by $i\in N\setminus\{1\}$ minimizes the expectation of his cost functional expressed in Equation (\ref{0}) where $w_{ij}\neq 0$ if $j=1$, subject to his stochastic opinion dynamics
\begin{align}\label{3}
dx^i(s)=\left[\mbox{$\frac{1}{\tilde{\lambda}_i}$}\left(k_ix^i(s)+w_{i1}\bar x^{1*}\right)+\hat\xi_i(s)\left(x^i(s)-\bar x^{1*}\right)-u^i(s)\right]ds+\sqrt{2\sigma}B^i(s),
\end{align}
where for all $i\in N\setminus\{1\}$, $\hat{\xi}_i(s)=\frac{w_{i1}\cosh\left(\sqrt{\tilde{\lambda}_i}(t-s)\right)}{\tilde{\lambda}_i\cosh\left(\sqrt{\tilde{\lambda}_i}t\right)}$, $\tilde{\lambda}_i=k_i+w_{i1}$, $u^i(s)$ is the control of opinion, $\sigma>0$ is a constant diffusion component and $B^i(s)$ the Brownian motion of agent $i$. In this framework we assume that, apart from the leader other agents have very small influence in $i^{th}$ agent's opinion. 

\begin{prop}
	Suppose, there is a network where all agents are unilaterally connected to their leader. Let agent $i$ minimizes his objective cost function
	\begin{align}\label{4}
	\E\left\{\mbox{$\frac{1}{2}$} \sum_{i=1}^{n-1}w_{i1}\left[x^i(s)-x^j(s)\right]^2+\mbox{$\frac{1}{2}$}k_i\left[x^i(s)-x_0^i\right]^2+\mbox{$\frac{1}{2}$}\left[u^i(s)\right]^2\right\},
	\end{align}
	subject to the stochastic opinion dynamics expressed in Equation (\ref{3}). For $b,d>0$, define $h^i(s,x^i)=\exp(sbx^i+d)$.
	
	(i) Then for 
	\begin{align}
	&f^i(s,\mathbf x,\lambda^i,u^i)\notag\\ &=\mbox{$\frac{1}{2}$}\sum_{i=1}^{n-1}w_{i1}\left(x^i-x^j\right)^2+\mbox{$\frac{1}{2}$}k_i\left(x^i-x_0^i\right)^2+\mbox{$\frac{1}{2}$}\left(u^i\right)^2+b\lambda^ix^ih^i(s,x^i)+\mbox{$\frac{\partial \lambda^i}{\partial s}$}h^i(s,x^i)\notag\\&\hspace{.5cm}+sb\lambda^ih^i(s,x^i)\left[\mbox{$\frac{1}{\tilde{\lambda}_i}$}\left(k_ix^i+w_{i1}\bar x^{1*}\right)+\hat\xi_i\left(x^i-\bar x^{1*}\right)-u^i\right]+s^2b^2\sigma\lambda^ih^i(s,x^i),\notag
	\end{align}
	we have a feedback Nash Equilibrium control of opinion dynamics
	\begin{align}\label{fc.0}
	\phi_0^{i*}(s,x^i)=\tilde p+\left\{\tilde q+\left[\tilde{q}^2+(\tilde r-\tilde{p}^2)^3\right]^{\frac{1}{2}}\right\}^{\frac{1}{3}}+\left\{\tilde q-\left[\tilde{q}^2+(\tilde r-\tilde{p}^2)^3\right]^{\frac{1}{2}}\right\}^{\frac{1}{3}},
	\end{align}
	where
	\begin{align}
	\tilde p&=-\frac{\tilde B_2(s,\hat\xi_i,x^i,x^j,\lambda^i)}{3B_1(s,x^i,\lambda^i)},\notag\\\tilde q&=\tilde{p}^3+\frac{\tilde B_2(s,\hat\xi_i,x^i,x^j,\lambda^i)\tilde B_3(s,\hat\xi_i,x^i,x^j,\lambda^i)-3\tilde B_1(s,x^i,\lambda^i)\tilde B_4(s,\hat\xi_i,x^i,x^j,\lambda^i)}{6[\tilde B_1(s,x^i,\lambda^i)]^2},\notag\\
	\tilde r &=\frac{\tilde B_3(s,\hat\xi_i,x^i,x^j,\lambda^i)}{3\tilde B_1(s,x^i,\lambda^i)},\notag
	\end{align}
	$\tilde B_1=(\tilde C_2)^2$, $\tilde B_2=-\tilde C_2(2\tilde A_2+\tilde C_1)$, $\tilde B_3=(\tilde A_2)^2-2\tilde A_2\tilde C_1\tilde C_2-(\tilde C_3)^2$, $\tilde B_4=\tilde A_1\tilde C_3-\tilde C_1(\tilde A_2)^2$, $\tilde C_1=sb\lambda^ih^i(s,x^i)$, $\tilde C_2=(sb)^3\lambda^ih^i(s,x^i)$, and $\tilde C_3=(sb)^2\lambda^ih^i(s,x^i)$.
	
	(ii) For a unique solution of the wave function $\Psi_{is}(x)$ as expressed in Proposition \ref{p1} and $\lambda^i(s)$ is a $C^2$ function with respect to $s$, an optimal opinion ${x}^{i*}$ is obtained by solving following equation
	\begin{align*}
	&sb^3\lambda^ih^i(s,x^i)(x^i)^2+h^i(s,x^i)\biggr\{2b\lambda^i+sb^2\mbox{$\frac{\partial\lambda^i}{\partial s}$}+s[1+sb^2\mbox{$\frac{\partial\lambda^i}{\partial s}$}+b\lambda^i(1+b+sb)]\left(\hat{\xi}_i+\frac{k_i}{\tilde{\lambda}_i}\right)\\&\hspace{.25cm}+s^2b^2\mbox{$\frac{\partial\hat{\xi}_i}{\partial s}$}+s^3b^4\sigma\lambda^i-sb^2\lambda^i\bigg[1+s\left(\hat{\xi}_i+\frac{k_i}{\tilde{\lambda}_i}\right)\bigg]\biggr\}x^i-(k_i+w_{i1})x^i+h^i(s,x^i)\times\\&\hspace{.5cm}\biggr\{sb\mbox{$\frac{\partial^2\lambda^i}{\partial s^2}$}+b\mbox{$\frac{\partial\lambda^i}{\partial s}$}-sb[sb\mbox{$\frac{\partial\lambda^i}{\partial s}$}+\lambda^i(1+b+sb)]\bigg[\biggr(\hat{\xi}_i+\frac{w_{i1}}{\tilde{\lambda}_i}\bigg)\bar x^{1*}+u^i\bigg]+\left(\hat{\xi}_i+\frac{k_i}{\tilde{\lambda}_i}\right)\times\\&\hspace{1cm}b(\lambda^i+\mbox{$\frac{\partial\lambda^i}{\partial s}$})+sb\lambda^i(1-sb\bar x^{1*})\mbox{$\frac{\partial\hat\xi_i}{\partial s}$}+s^2b^3\sigma\lambda^i(1+2s+s\mbox{$\frac{\partial\lambda^i}{\partial s}$})-b\tilde A_4(s,\sigma,\hat{\xi}_i,\lambda^i,u^i,x^j)\biggr\}\\&\hspace{1.25cm}+\tilde A_3(w_{i1},k_i,x^j)=0,
	\end{align*}
	which is
	\begin{align}\label{9.0}
	x^{i*}&=\tilde A_{12}+\left\{\tilde A_{13}+\left[(\tilde A_{13})^2+[\tilde A_{14}-(\tilde A_{12})^2]^3\right]^{\frac{1}{2}}\right\}^{\frac{1}{3}}+\left\{\tilde A_{13}-\left[(\tilde A_{13})^2+[\tilde A_{14}-(\tilde A_{12})^2]^3\right]^{\frac{1}{2}}\right\}^{\frac{1}{3}},
	\end{align}
	where 
	\begin{align}
	&\tilde A_{14}=\frac{\tilde A_{10}(s,\sigma,\hat\xi_i,\lambda^i,u^i)}{3\tilde A_8(s,\lambda^i)},\notag\\& \tilde A_{13}=(\tilde A_{12})^3+\frac{\tilde A_9(s,\sigma,\hat\xi_i,\lambda^i,u^i)\tilde A_{10}(s,\sigma,\hat\xi_i,\lambda^i,u^i)-3\tilde A_8(s,\lambda^i)\tilde A_{11}(s,\sigma,\hat\xi_i,w_{i1},k_i,\lambda^i,u^i,x^j)}{6[\tilde A_8(s,\lambda^i)]^2},\notag\\ & \tilde A_{12}=-\ \frac{\tilde A_9(s,\sigma,\hat\xi_i,\lambda^i,u^i)}{3\tilde A_8(s,\lambda^i)},\notag\\& \tilde A_{11}(s,\sigma,w_{i1},k_i,\hat\xi_i,\lambda^i,u^i,x^j)=\tilde A_3(w_{i1},k_i,x^j)+e\tilde A_7(s,\sigma,\hat\xi_i,\lambda^i,u^i),\notag\\ & \tilde A_{10}(s,\sigma,w_{i1},k_i,\hat\xi_i,\lambda^i,u^i)=k_i+w_{i1}+e\tilde A_6(s,\sigma,\hat\xi_i,\lambda^i,u^i)+sb\tilde A_7(s,\sigma,\hat\xi_i,\lambda^i,u^i),\notag\\ & \tilde A_9(s,\sigma,\hat\xi_i,\lambda^i,u^i)=e+\tilde A_5(s,\lambda^i)+sb\tilde A_6(s,\sigma,\hat\xi_i,\lambda^i,u^i),\notag\\& \tilde A_8(s,\lambda^i)=sb\tilde A_5(s,\lambda^i),\notag\\&\tilde A_7(s,\sigma,\hat{\xi}_i,\lambda^i,u^i)=sb\mbox{$\frac{\partial^2\lambda^i}{\partial s^2}$}+b\mbox{$\frac{\partial\lambda^i}{\partial s}$}-sb[sb\mbox{$\frac{\partial\lambda^i}{\partial s}$}+\lambda^i(1+b+sb)]\bigg[\biggr(\hat{\xi}_i+\frac{w_{i1}}{\tilde{\lambda}_i}\bigg)\bar x^{1*}+u^i\bigg]+\left(\hat{\xi}_i+\frac{k_i}{\tilde{\lambda}_i}\right)\notag\\&\hspace{1cm}\times b(\lambda^i+\mbox{$\frac{\partial\lambda^i}{\partial s}$})+sb\lambda^i(1-sb\bar x^{1*})\mbox{$\frac{\partial\hat\xi_i}{\partial s}$}+s^2b^3\sigma\lambda^i(1+2s+s\mbox{$\frac{\partial\lambda^i}{\partial s}$})-b\tilde A_4(s,\sigma,\hat{\xi}_i,\lambda^i,u^i),\notag\\ &\tilde A_6(s,\sigma,\hat{\xi}_i,\lambda^i,u^i)=2b\lambda^i+sb^2\mbox{$\frac{\partial\lambda^i}{\partial s}$}+s[1+sb^2\mbox{$\frac{\partial\lambda^i}{\partial s}$}+b\lambda^i(1+b+sb)]\left(\hat{\xi}_i+\frac{k_i}{\tilde{\lambda}_i}\right)\notag\\&\hspace{4cm}+s^2b^2\mbox{$\frac{\partial\hat{\xi}_i}{\partial s}$}+s^3b^4\sigma\lambda^i-sb^2\lambda^i\bigg[1+s\left(\hat{\xi}_i+\frac{k_i}{\tilde{\lambda}_i}\right)\bigg],\notag\\ &\tilde A_5(s,\lambda^i)=sb^3\lambda^i,\notag\\ &\tilde A_4(s,\sigma,\hat{\xi}_i,\lambda^i,u^i)=\lambda^i+s\mbox{$\frac{\partial\lambda^i}{\partial s}$}+s^2b\lambda^i\left(\frac{w_{i1}}{\tilde \lambda_i}\bar x^{1*}-\hat{\xi}_i\bar x^{1*}-u^i\right)+s\lambda^i\bigg[\hat{\xi}_i+\frac{k_i}{\tilde{\lambda}_i}+s^2b^3\sigma\bigg],\notag
	\end{align}
	and
	\begin{align}
	&\tilde A_3(w_{i1},k_i,x^j)=w_{i1}x^j+k_ix_0^i.\notag
	\end{align}
\end{prop}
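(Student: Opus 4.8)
The plan is to run, for each follower $i\in N\setminus\{1\}$, exactly the two-step scheme already used for Proposition~\ref{p2}: apply Proposition~\ref{p0} to obtain the feedback control as a root of a cubic, and then apply Proposition~\ref{p1} to obtain the optimal opinion as a root of a second cubic. The only difference from Proposition~\ref{p2} is the drift: here $\mu^i(s,x^i,u^i)=\tilde\lambda_i^{-1}(k_ix^i+w_{i1}\bar x^{1*})+\hat\xi_i(s)(x^i-\bar x^{1*})-u^i$ from Equation~(\ref{3}), with constant diffusion $\sigma^i=\sqrt{2\sigma}$ so that $\tfrac{1}{2}\lambda^i(\sigma^i)^2h^i_{xx}=\sigma s^2b^2\lambda^ih^i$. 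First I would fix $h^i(s,x^i)=\exp(sbx^i+d)$ with $b,d>0$, record $h^i_s=bx^ih^i$, $h^i_x=sbh^i$, $h^i_{xx}=s^2b^2h^i$, and substitute these together with $\mu^i$ into the function $f^i$ of Proposition~\ref{p0}; since $\bar x^{1*}$ is a constant fixed before the game starts, this reproduces the displayed $f^i(s,\mathbf x,\lambda^i,u^i)$.

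For part~(i) I would compute the four partial derivatives entering the optimality relation $f^i_u(f^i_{xx})^2=2f^i_xf^i_{xu}$ of Proposition~\ref{p0}. One finds $f^i_u=u^i-sb\lambda^ih^i$ and $f^i_{xu}=-s^2b^2\lambda^ih^i$, while $f^i_x$ and $f^i_{xx}$ each split into a $u^i$-free part (call them $\tilde A_1$, $\tilde A_2$) and a part linear in $u^i$ with coefficients $-(sb)^2\lambda^ih^i=-\tilde C_3$ and $-(sb)^3\lambda^ih^i=-\tilde C_2$, exactly as in the corresponding step of the proof of Proposition~\ref{p2}. Substituting into $f^i_u(f^i_{xx})^2=2f^i_xf^i_{xu}$ and clearing denominators collapses the relation to the cubic $\tilde B_1(u^i)^3+\tilde B_2(u^i)^2+\tilde B_3u^i+\tilde B_4=0$ with the advertised $\tilde B_1,\dots,\tilde B_4$ (built from $\tilde A_1,\tilde A_2$ and $\tilde C_1=sb\lambda^ih^i$, $\tilde C_2,\tilde C_3$). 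Cardano's formula then gives $\phi_0^{i*}(s,x^i)=\tilde p+\{\tilde q+[\tilde q^2+(\tilde r-\tilde p^2)^3]^{1/2}\}^{1/3}+\{\tilde q-[\tilde q^2+(\tilde r-\tilde p^2)^3]^{1/2}\}^{1/3}$ with $\tilde p,\tilde q,\tilde r$ as stated.

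For part~(ii) I would invoke Proposition~\ref{p1}: uniqueness of the transition wave function $\Psi_{is}(x^i)$ forces the screening identity $\partial_sf^i=\partial_{x^i}f^i$, and differentiating once more in $x^i$ gives $\partial^2_{sx^i}f^i=\partial_{x^i}f^i$, the equation that pins down $x^{i*}$. I would then evaluate $\partial_sf^i$, $\partial_{x^i}f^i$ and $\partial^2_{sx^i}f^i$ from the explicit $f^i$, being careful that $x^i$ enters $f^i$ both directly and through the drift coefficient $\hat\xi_i+\tilde\lambda_i^{-1}k_i$; equating them reproduces the long displayed identity. Using the small-opinion linearization $h^i(s,x^i)=\exp(sbx^i+d)\approx e+sbx^i$ with $e=1+d$, every term becomes polynomial in $x^i$, and collecting powers yields the cubic $\tilde A_8(x^i)^3+\tilde A_9(x^i)^2+\tilde A_{10}x^i+\tilde A_{11}=0$ with the coefficients $\tilde A_5,\dots,\tilde A_{11}$ listed in the statement, where $\tilde A_3(w_{i1},k_i,x^j)=w_{i1}x^j+k_ix_0^i$ and $\tilde A_4$ is the collected constant-and-control remainder. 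Cardano again produces the three roots $x^{i*}=\tilde A_{12}+\{\tilde A_{13}+[(\tilde A_{13})^2+(\tilde A_{14}-(\tilde A_{12})^2)^3]^{1/2}\}^{1/3}+\{\tilde A_{13}-[(\tilde A_{13})^2+(\tilde A_{14}-(\tilde A_{12})^2)^3]^{1/2}\}^{1/3}$, and substituting $x^{i*}$ into $\phi_0^{i*}$ yields the feedback Nash equilibrium control.

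The laborious part is the bookkeeping in~(ii): tracking how the $x^i$-linear drift coefficient $\hat\xi_i+\tilde\lambda_i^{-1}k_i$ propagates through the mixed derivative and interacts with the $\partial_s\lambda^i$ and $\partial^2_{ss}\lambda^i$ terms, and then regrouping everything into the $\tilde A_j$'s in the stated form. The only genuine conceptual point is to check that the specialized $f^i$ still satisfies the hypotheses under which Propositions~\ref{p0} and~\ref{p1} were proved --- the rapidly-falling (Schwartz-space) estimates, the validity of the Gaussian integrals, and $\lambda^i\in C^2$ --- which it does, because these are inherited from Assumption~\ref{as0} and the $C^2$ and convexity conditions on the cost integrand together with the explicit smooth form of $h^i$, $\hat\xi_i$ and $\mu^i$. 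No inequality beyond those already used in Propositions~\ref{p0}--\ref{p2} is needed; in particular the linearization of $h^i$ is the same device as in the proof of Proposition~\ref{p2}.
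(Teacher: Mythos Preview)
Your proposal is correct and follows essentially the same route as the paper: for part~(i) you compute $f^i_x,f^i_{xx},f^i_u,f^i_{xu}$ from the displayed $f^i$, isolate the $u^i$-free parts $\tilde A_1,\tilde A_2$ and the linear-in-$u^i$ parts with coefficients $\tilde C_1,\tilde C_2,\tilde C_3$, feed them into the optimality relation $f^i_u(f^i_{xx})^2=2f^i_xf^i_{xu}$ of Proposition~\ref{p0}, and solve the resulting cubic by Cardano; for part~(ii) you compute $\partial_{x^i}f^i$ and $\partial^2_{sx^i}f^i$, equate them as in Proposition~\ref{p1}, linearize $h^i(s,x^i)\approx e+sbx^i$ with $e=1+d$, regroup into the cubic $\tilde A_8(x^i)^3+\tilde A_9(x^i)^2+\tilde A_{10}x^i+\tilde A_{11}=0$, and apply Cardano again. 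This is exactly the paper's argument, with the only difference from Proposition~\ref{p2} being the drift $\mu^i=\tilde\lambda_i^{-1}(k_ix^i+w_{i1}\bar x^{1*})+\hat\xi_i(x^i-\bar x^{1*})-u^i$ and the attendant bookkeeping.
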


\begin{proof}
	(i). For $b>0$, $d>0$ let $h^i(s,x^i)=\exp(sbx^i+d)$, $\frac{\partial}{\partial s}h^i(s,x^i)=bx^ih^i(s,x^i)$, $\frac{\partial}{\partial x^i}h^i(s,x^i)=sbh^i(s,x^i)$ and $\frac{\partial^2}{\partial (x^i)^2}h^i(s,x^i)=s^2b^2h^i(s,x^i)$. Hence, Proposition \ref{p0} implies,
	\begin{align}
	f^i(s,\mathbf x,\lambda^i,u^i)&=\mbox{$\frac{1}{2}$}\sum_{i=1}^{n-1}w_{i1}\left(x^i-x^j\right)^2+\mbox{$\frac{1}{2}$}k_i\left(x^i-x_0^i\right)^2+\mbox{$\frac{1}{2}$}\left(u^i\right)^2+b\lambda^ix^ih^i(s,x^i)+\mbox{$\frac{\partial \lambda^i}{\partial s}$}h^i(s,x^i)\notag\\&\hspace{.5cm}+sb\lambda^ih^i(s,x^i)\left[\mbox{$\frac{1}{\tilde{\lambda}_i}$}\left(k_ix^i+w_{i1}\bar x^{1*}\right)+\hat\xi_i\left(x^i-\bar x^{1*}\right)-u^i\right]+s^2b^2\sigma\lambda^ih^i(s,x^i).\notag
	\end{align}
	Now
	\begin{align}
	\mbox{$\frac{\partial}{\partial x^i}$}f^i(s,\mathbf x,\lambda^i,u^i)&=w_{i1}(x^i-x^j)+k_i(x^i-x_0^i)+bh^i(s,x^i)\biggr\{s\mbox{$\frac{\partial \lambda^i}{\partial s}$}\notag\\&\hspace{.5cm}+\lambda^i\bigg[1+bsx^i+s^2b\left[\mbox{$\frac{1}{\tilde{\lambda}_i}$}\left(k_ix^i+w_{i1}\bar x^{1*}\right)+\hat\xi_i\left(x^i-\bar x^{1*}\right)-u^i\right]\notag\\&\hspace{1cm}+s\left(\hat{\xi_i}+\frac{k_i}{\tilde{\lambda}_i}\right)+\sigma s^3b^2\bigg]\biggr\}\notag\\&=\tilde A_1(s,\hat{\xi}_i,x^i,x^j,\lambda^i)-s^2b^2\lambda^ih^i(s,x^i)u^i,\notag\\
	\mbox{$\frac{\partial^2}{\partial (x^i)^2}$}f^i(s,\mathbf x,\lambda^i,u^i)&=w_{i1}+k_i+sb^2h^i(s,x^i)\biggr\{s\mbox{$\frac{\partial \lambda^i}{\partial s}$}+\lambda^i\biggr[1+sbx^i+s\left(\hat{\xi}_i+\frac{k_i}{\tilde{\lambda}_i}\right)+\sigma s^3b^2\notag\\&\hspace{.5cm}+s^2b\left[\mbox{$\frac{1}{\tilde{\lambda}_i}$}\left(k_ix^i+w_{i1}\bar x^{1*}\right)+\hat\xi_i\left(x^i-\bar x^{1*}\right)-u^i\right]\biggr]\biggr\}+sb^2\lambda^i\left(\hat{\xi}_i+\frac{k_i}{\tilde{\lambda}_i}\right)h^i(s,x^i)\notag\\&=\tilde A_2(s,\hat\xi_i,x^i,x^j,\lambda^i)-s^3b^3\lambda^ih^i(s,x^i)u^i,\notag\\
	\mbox{$\frac{\partial}{\partial u^i}$}f^i(s,\mathbf x,\lambda^i,u^i)&=u^i-sb\lambda^ih^i(s,x^i),\notag
	\end{align}
	and,
	\begin{align}
	\mbox{$\frac{\partial^2}{\partial x^i\partial u^i}$}f^i(s,\mathbf x,\lambda^i,u^i)&=-s^2b^2\lambda^i h^i(s,x^i).\notag
	\end{align}
	Therefore, Equation (\ref{w21}) implies
	\begin{align}
	&\left[u^i-sb\lambda^ih^i(s,x^i)\right]\left[\tilde A_2(s,\hat\xi_i,x^i,x^j,\lambda^i)-s^3b^3u^i\lambda^ih^i(s,x^i)\right]^2\notag\\&=2s^2b^2\lambda^ih^i(s,x^i)\left[s^2b^2u^i\lambda^ih^i(s,x^i)-\tilde A_1(s,\hat\xi_i,x^i,x^j,\lambda^i)\right],\notag
	\end{align}
	and the cubic polynomial of agent $i$ with respect to control under the presence of a leader is 
	\begin{align}
	\tilde B_1(s,x^i,\lambda^i) (u^i)^3+\tilde B_2(s,\hat\xi_i,x^i,x^j,\lambda^i)(u^i)^2+\tilde B_3(s,\hat\xi_i,x^i,x^j,\lambda^i)u^i+\tilde B_4(s,\hat\xi_i,x^i,x^j,\lambda^i)=0,\notag
	\end{align}
	where $\tilde B_1=(\tilde C_2)^2$, $\tilde B_2=-\tilde C_2(2\tilde A_2+\tilde C_1)$, $\tilde B_3=(\tilde A_2)^2-2\tilde A_2\tilde C_1\tilde C_2-(\tilde C_3)^2$, $\tilde B_4=\tilde A_1\tilde C_3-\tilde C_1(\tilde A_2)^2$, $\tilde C_1(s,x^i,\lambda^i)=sb\lambda^ih^i(s,x^i)$, $\tilde C_2(s,x^i,\lambda^i)=(sb)^3\lambda^ih^i(s,x^i)$, and 
	$\tilde C_3(s,x^i,\lambda^i)=(sb)^2\lambda^ih^i(s,x^i)$.
	Therefore, feedback Nash equilibrium control under the presence of a leader is
	\begin{align}
	\phi_0^{i*}(s,x^i)=\tilde p+\left\{\tilde q+\left[\tilde{q}^2+(\tilde r-\tilde{p}^2)^3\right]^{\frac{1}{2}}\right\}^{\frac{1}{3}}+\left\{\tilde q-\left[\tilde{q}^2+(\tilde r-\tilde{p}^2)^3\right]^{\frac{1}{2}}\right\}^{\frac{1}{3}},\notag
	\end{align}
	where
	\begin{align}
	\tilde p&=-\frac{\tilde B_2(s,\hat\xi_i,x^i,x^j,\lambda^i)}{3B_1(s,x^i,\lambda^i)},\notag\\\tilde q&=\tilde{p}^3+\frac{\tilde B_2(s,\hat\xi_i,x^i,x^j,\lambda^i)\tilde B_3(s,\hat\xi_i,x^i,x^j,\lambda^i)-3\tilde B_1(s,x^i,\lambda^i)\tilde B_4(s,\hat\xi_i,x^i,x^j,\lambda^i)}{6[\tilde B_1(s,x^i,\lambda^i)]^2},\notag\\
	\tilde r &=\frac{\tilde B_3(s,\hat\xi_i,x^i,x^j,\lambda^i)}{3\tilde B_1(s,x^i,\lambda^i)}.\notag
	\end{align}
	
	(ii). Using Proposition \ref{p1} the right hand side of Equation (\ref{op0}) becomes,
	\begin{align}\label{5}
	& \mbox{$\frac{\partial}{\partial x^i}$}f^i[s,\mathbf x(s),\lambda^i(s),u^i(s)]\notag\\&=x^i(k_i+w_{i1})-(w_{i1}x^j+k_ix_0^i)+bh^i(s,x^i)\biggr\{sb\lambda^ix^i\bigg[1+s(\hat{\xi}_i+\frac{k_i}{\tilde{\lambda}_i})\bigg]+\lambda^i+s\mbox{$\frac{\partial\lambda^i}{\partial s}$}\notag\\&\hspace{1cm}+s^2b\lambda^i\left[\left(\frac{w_{i1}}{\tilde \lambda_i}-\hat{\xi}_i\right)\bar x^{1*}-u^i\right]+s\lambda^i\bigg[\hat{\xi}_i+\frac{k_i}{\tilde{\lambda}_i}+s^2b^3\sigma\bigg]\biggr\}\notag\\&=x^i(k_i+w_{i1})-\tilde A_3(w_{i1},k_i,x^j)+bh^i(s,x^i)\left[\tilde A_4(s,\sigma,\hat\xi_i,\lambda^i,u^i)+sb\lambda^ix^i\bigg[1+s\left(\hat{\xi}_i+\frac{k_i}{\tilde{\lambda}_i}\right)\bigg]\right],
	\end{align}
	the left hand side implies
	\begin{align}
	&\mbox{$\frac{\partial}{\partial s}$}f^i[s,\mathbf x(s),\lambda^i(s),u^i(s)]\notag\\&=h^i(s,x^i)\biggr\{\lambda^i(bx^i)^2+\mbox{$\frac{\partial^2\lambda^i}{\partial s^2}$}+bx^i\mbox{$\frac{\partial \lambda^i}{\partial s}$}\notag\\&\hspace{.25cm}+b\left[s\mbox{$\frac{\partial\lambda_i}{\partial s}$}+\lambda^i(1+sx^i)\right]\left[\frac{1}{\tilde{\lambda}_i}(k_ix^i+w_{i1}\bar x^{1*})+\hat{\xi}_i(x^i-\bar x^{1*})-u^i\right]\notag\\&\hspace{.5cm}+sb\lambda^i\left(x^i-\bar x^{1*}\right)\mbox{$\frac{\partial\hat\xi_i}{\partial s}$}+s^2b^2\sigma\mbox{$\frac{\partial\lambda^i}{\partial s}$}+sb^2\sigma\lambda_i[2+sbx^i]\biggr\},\notag
	\end{align}
	and
	\begin{align}\label{6}
	& \mbox{$\frac{\partial^2}{\partial s \partial x^i}$}f^i[s,\mathbf x(s),\lambda^i(s),u^i(s)]\notag\\&=h^i(s,x^i)\biggr\{2b\lambda^ix^i+sb^3\lambda^i(x^i)^2+sb\mbox{$\frac{\partial^2\lambda^i}{\partial s^2}$}+b(1+sbx^i)\mbox{$\frac{\partial\lambda^i}{\partial s}$}+\left[(sb)^2\mbox{$\frac{\partial\lambda^i}{\partial s}$}+sb\lambda^i(1+b+sb)\right]\times\notag\\&\hspace{1cm}\left[\frac{1}{\tilde{\lambda}_i}(k_ix^i-w_{i1}\bar x^{1*})+\hat{\xi}_i(x^i-\bar x^{1*})-u^i\right]+\left(\hat{\xi}_i+\frac{k_i}{\tilde{\lambda}_i}\right)\left[sb\mbox{$\frac{\partial\lambda^i}{\partial s}$}+b\lambda^i(1+sx^i)\right]\notag\\&\hspace{2cm}+sb\lambda^i\left[1+sb\left(x^i-\bar x^{1*}\right)\right]\mbox{$\frac{\partial\hat\xi_i}{\partial s}$}+s^2b^3\sigma\lambda^i[1+2s+sbx^i+s\mbox{$\frac{\partial\lambda^i}{\partial s}$}]\biggr\}.
	\end{align}
	Comparing Equations (\ref{5}) and \ref{6} we get,
	\begin{align}
	& h^i(s,x^i)\biggr\{2b\lambda^ix^i+sb^3\lambda^i(x^i)^2+sb\mbox{$\frac{\partial^2\lambda^i}{\partial s^2}$}+b(1+sbx^i)\mbox{$\frac{\partial\lambda^i}{\partial s}$}+\left[(sb)^2\mbox{$\frac{\partial\lambda^i}{\partial s}$}+sb\lambda^i(1+b+sb)\right]\times\notag\\&\hspace{1cm}\left[\frac{1}{\tilde{\lambda}_i}(k_ix^i-w_{i1}\bar x^{1*})+\hat{\xi}_i(x^i-\bar x^{1*})-u^i\right]+\left(\hat{\xi}_i+\frac{k_i}{\tilde{\lambda}_i}\right)\left[sb\mbox{$\frac{\partial\lambda^i}{\partial s}$}+b\lambda^i(1+sx^i)\right]\notag\\&\hspace{2cm}+sb\lambda^i\left[1+sb\left(x^i-\bar x^{1*}\right)\right]\mbox{$\frac{\partial\hat\xi_i}{\partial s}$}+s^2b^3\sigma\lambda^i[1+2s+sbx^i+s\mbox{$\frac{\partial\lambda^i}{\partial s}$}]\biggr\}\notag\\ &=x^i(k_i+w_{i1})-\tilde A_3(w_{i1},k_i,x^j)+bh^i(s,x^i)\left[\tilde A_4(s,\sigma,\hat\xi_i,\lambda^i,u^i)+sb\lambda^ix^i\bigg[1+s\left(\hat{\xi}_i+\frac{k_i}{\tilde{\lambda}_i}\right)\bigg]\right].\notag
	\end{align}
	The polynomial of the opinion dynamics is
	\begin{align}
	& sb^3\lambda^ih^i(s,x^i)(x^i)^2+h^i(s,x^i)\biggr\{2b\lambda^i+sb^2\mbox{$\frac{\partial\lambda^i}{\partial s}$}+s[1+sb^2\mbox{$\frac{\partial\lambda^i}{\partial s}$}+b\lambda^i(1+b+sb)]\left(\hat{\xi}_i+\frac{k_i}{\tilde{\lambda}_i}\right)\notag\\&\hspace{.25cm}+s^2b^2\mbox{$\frac{\partial\hat{\xi}_i}{\partial s}$}+s^3b^4\sigma\lambda^i-sb^2\lambda^i\bigg[1+s\left(\hat{\xi}_i+\frac{k_i}{\tilde{\lambda}_i}\right)\bigg]\biggr\}x^i-(k_i+w_{i1})x^i+h^i(s,x^i)\times\notag\\&\hspace{.5cm}\biggr\{sb\mbox{$\frac{\partial^2\lambda^i}{\partial s^2}$}+b\mbox{$\frac{\partial\lambda^i}{\partial s}$}-sb[sb\mbox{$\frac{\partial\lambda^i}{\partial s}$}+\lambda^i(1+b+sb)]\bigg[\biggr(\hat{\xi}_i+\frac{w_{i1}}{\tilde{\lambda}_i}\bigg)\bar x^{1*}+u^i\bigg]+\left(\hat{\xi}_i+\frac{k_i}{\tilde{\lambda}_i}\right)\times\notag\\&\hspace{1cm}b(\lambda^i+\mbox{$\frac{\partial\lambda^i}{\partial s}$})+sb\lambda^i(1-sb\bar x^{1*})\mbox{$\frac{\partial\hat\xi_i}{\partial s}$}+s^2b^3\sigma\lambda^i(1+2s+s\mbox{$\frac{\partial\lambda^i}{\partial s}$})-b\tilde A_4(s,\sigma,\hat{\xi}_i,\lambda^i,u^i)\biggr\}\notag\\&\hspace{1.25cm}+\tilde A_3(w_{i1},k_i,x^j)=0,\notag
	\end{align}
	or, 
	\begin{align}\label{7}
	& \tilde A_5(s,\lambda^i)h^i(s,x^i)(x^i)^2+\tilde A_6(s,\sigma,\hat\xi_i,\lambda^i,u^i)h^i(s,x^i)x^i+(k_i+w_{i1})x^i\notag\\&\hspace{.5cm}+\tilde A_7(s,\sigma,\hat\xi_i,\lambda^i,u^i)h^i(s,x^i)+\tilde A_3(w_{i1},k_i,x^j)=0.
	\end{align}
	As in Equation (\ref{7}) $h^i(s,x^i)=\exp(sbx^i+d)$, for $b>0$, $d>0$ fixed and a very small value of $x^i$ it can be approximated as $h^i(s,x^i)=1+(sbx^i+d)+o([sbx^i+d]^2)\approx1+d+sbx^i=e+sbx^i$ where assume $e=1+d$. 
	
	Therefore, we get a cubic equation expressed as,
	\begin{align}\label{8}
	& \tilde A_8(s,\lambda^i)(x^i)^3+\tilde A_9(s,\sigma,\hat\xi_i,\lambda^i,u^i)(x^i)^2\notag\\&+\tilde A_{10}(s,\sigma,w_{i1},k_i,\hat\xi_i,\lambda^i,u^i,x^j)x^i+\tilde A_{11}(s,\sigma,w_{i1},k_i,\hat\xi_i,\lambda^i,u^i,x^j)=0,
	\end{align}
	where
	\begin{align}
	& \tilde A_8(s,\lambda^i)=sb\tilde A_5(s,\lambda^i),\notag\\&\tilde A_9(s,\sigma,\hat\xi_i,\lambda^i,u^i)=e+\tilde A_5(s,\lambda^i)+sb\tilde A_6(s,\sigma,\hat\xi_i,\lambda^i,u^i),\notag\\&\tilde A_{10}(s,\sigma,w_{i1},k_i,\hat\xi_i,\lambda^i,u^i,x^j)=k_i+w_{i1}+e\tilde A_6(s,\sigma,\hat\xi_i,\lambda^i,u^i)+sb\tilde A_7(s,\sigma,\hat\xi_i,\lambda^i,u^i),\notag
	\end{align}
	and 
	\begin{align}
	\tilde A_{11}(s,\sigma,\hat\xi_i,\lambda^i,u^i,x^j)=\tilde A_3(w_{i1},k_i,x^j)+e\tilde A_7(s,\sigma,\hat\xi_i,\lambda^i,u^i).\notag
	\end{align}
	Solving Equation (\ref{8}) gives agent $i$'s optimal opinion
	\begin{align}\label{9}
	x^{i*}&=\tilde A_{12}+\left\{\tilde A_{13}+\left[(\tilde A_{13})^2+[\tilde A_{14}-(\tilde A_{12})^2]^3\right]^{\frac{1}{2}}\right\}^{\frac{1}{3}}+\left\{\tilde A_{13}-\left[(\tilde A_{13})^2+[\tilde A_{14}-(\tilde A_{12})^2]^3\right]^{\frac{1}{2}}\right\}^{\frac{1}{3}},
	\end{align}
	where 
	\begin{align}
	& \tilde A_{12}=-\ \frac{\tilde A_9(s,\sigma,\hat\xi_i,\lambda^i,u^i)}{3\tilde A_8(s,\lambda^i)},\notag\\& \tilde A_{13}=(\tilde A_{12})^3+\frac{\tilde A_9(s,\hat\xi_i,\gamma,\lambda^i,u^i)\tilde A_{10}(s,\sigma,w_{i1},k_i,\hat{\xi}_i,\lambda^i,u^i,x^j)-3\tilde A_8(s,\lambda^i)\tilde A_{11}(s,\sigma,w_{i1},k_i,\hat{\xi}_i,\lambda^i,u^i,x^j)}{6[\tilde A_8(s,\lambda^i)]^2},\notag
	\end{align}
	and
	\begin{align}
	\tilde A_{14}=\frac{\tilde A_{10}(s,\sigma,w_{i1},k_i,\hat{\xi}_i,\lambda^i,u^i,x^j)}{3\tilde A_8(s,\lambda^i)}.\notag
	\end{align}
\end{proof}

\section{Discussion}
This paper shows consensus as a feedback Nash equilibrium from a stochastic differential game. The same integral cost function has been used  as in \cite{niazi2016} subject to a stochastic opinion dynamics. A Feynman-type path integral approach has been used to construct a Wick-rotated Schr\"odinger type equation (i.e a Fokker-Plank diffusion equation). Finally, optimal opinion $x^{i*}$ and control $u^{i*}$ have been determined after solving the first order condition of the Wick-rotated Schr\"odinger equation. So far from my knowledge, this is a new approach. As different people have different opinions, an opinion changes over time and stubbornness and influence from others have some effects on individual decisions under the assumption that human body is a automaton. The fundamental assumption of this paper is opinion dynamics is stochastic in nature which is another contribution of this paper. Furthermore, results of this paper  give more generalized solution of opinion dynamics than \citep{niazi2016}.
\bibliography{bib}
\end{document}